\newtheorem{theorem}{Theorem}
\newtheorem{lemma}{Lemma}
\newtheorem{remark}{Remark}
\newtheorem{assumption}{Assumption}
\begin{document}

\title{\LARGE \bf Fully Distributed Continuous-Time Algorithm for Nonconvex Optimization over Unbalanced Directed Networks}

\author{Jin~Zhang,
		Yahui~Hao,
        Lu~Liu,~\IEEEmembership{Senior Member, IEEE,}
        and~Haibo~Ji
\thanks{J. Zhang is with the Department of Automation, University of Science and Technology of China, Hefei 230027, China, and also with the Department of Biomedical Engineering, City University of Hong Kong, Hong Kong (e-mail: zj55555@mail.ustc.edu.cn).
	
	Y. Hao and L. Liu are with the Department of Biomedical Engineering, City University of Hong Kong, Hong Kong (e-mail: yahuihao2-c@my.cityu.edu.hk; luliu45@cityu.edu.hk).
	
	H. Ji is with the Department of Automation, University of Science and Technology of China, Hefei 230027, China (e-mail: jihb@ustc.edu.cn).}}


\maketitle

\begin{abstract}              
	This paper investigates the distributed continuous-time nonconvex optimization problem over unbalanced directed networks. The objective is to cooperatively drive all the agent states to an optimal solution that minimizes the sum of the local cost functions. Based on the topology balancing technique and adaptive control approach, a novel fully distributed algorithm is developed for each agent with neither prior global information concerning network connectivity nor convexity of local cost functions. By viewing the proposed algorithm as a perturbed system, its input-to-state stability with a vanishing perturbation is first established, and asymptotic convergence of the decision variables toward the optimal solution is then proved under the relaxed condition. A key feature of the algorithm design is that it removes the dependence on the smallest strong convexity constant of local cost functions, and the left eigenvector corresponding to the zero eigenvalue of the Laplacian matrix of unbalanced directed topologies. The effectiveness of the proposed fully distributed algorithm is illustrated with two examples.
\end{abstract}

\begin{IEEEkeywords}
	Fully distributed, nonconvex optimization, continuous-time optimization, unbalanced directed networks, adaptive control.
\end{IEEEkeywords}

\IEEEpeerreviewmaketitle

\section{Introduction}

\IEEEPARstart{D}{istributed} optimization problem (DOP) has experienced significant advances in the past decade because of its great potential in a wide range of applications. Typical examples of application include resource allocation, sensor networks, and power systems \cite{Molzahn2017survey,Ram2010distributed}. In the typical DOP of large-scale networks, each agent is often endowed with an individual local cost function. 
Seminal works on this topic primarily focus on discrete-time cases with convex local cost functions, which can be traced back to \cite{Tsitsiklis1984problems,Nedic2009distributed}. For more recent developments on distributed discrete-time convex optimization, one may refer to \cite{Nedic2014distributed,Xi2018linear,lu2020nesterov} and references therein.

In parallel, the distributed continuous-time convex optimization problem has also been extensively studied (see, for example, \cite{Wang2010control,Gharesifard2013distributed,Kia2015distributed,chen2020distributed,he2017continuous}) because many practical systems (e.g., unmanned vehicles and robots) operate in a continuous-time setting \cite{Yang2019survey}. A pioneering distributed gradient-based control approach was developed in \cite{Wang2010control} to address the DOP of multi-agent systems with single integrator dynamics over undirected graphs. In the subsequent work \cite{Gharesifard2013distributed}, the restriction on the communication network topologies was relaxed to balanced digraphs. By virtue of the proportional-integral control approach, a modified distributed Lagrangian-based algorithm was then proposed in \cite{Kia2015distributed} at the cost of special initialization so that communication and computation can be reduced. Other works that involve distributed continuous-time convex optimization, over either undirected or balanced directed networks, can be found in \cite{li2020distributed,li2021distributed} and references therein.

A central and standard assumption for the analysis of gradient-based minimization method in convex optimization is the convexity of the corresponding local cost functions \cite{bertsekas2009convex,Kia2015distributed}, which is exploited to not only refrain from the existence of local optima but also facilitate convergence analysis \cite{nesterov1998introductory}. However, it cannot be satisfied in a broad class of practical applications such as sparse approximations of images, matrix factorization, and compressed sensing \cite{bolte2014proximal}. In fact, cost functions in engineering practice are often nonconvex. More recently, there are increasing number of studies on distributed discrete-time nonconvex optimization providing that local cost functions satisfy additional but relaxed conditions \cite{scutari2019distributed,tatarenko2017non,engelmann2020decomposition}, such as the Polyak-Łojasiewicz (P-Ł) condition\cite{yi2021linear}, the $ \rho $-weakly convex condition\cite{chen2021distributed}, the $ \mu $-gradient dominated coondition \cite{tang2020distributed}, and the second order sufficiency condition \cite{farina2019distributed} among others. On the contrary, few works focus on developing continuous-time algorithms for distributed nonconvex optimization problems. Based on the canonical duality theory, a continuous-time algorithm was proposed in \cite{ren2021distributed} for a class of nonconvex optimization, but it can only be applied when undirected networks are considered.

The above-mentioned works, whether studying convex optimization or nonconvex optimization, assume that the concerned network topologies are undirected or balanced directed. It is of much more theoretical and practical significance to study unbalanced directed topologies as the information exchange between neighboring agents may be unidirectional due to limited bandwidth or other physical constraints. In the discrete-time case, by employing a row or a column stochastic matrix, several consensus-based strategies were proposed in \cite{Nedic2014distributed,Xi2018linear} to tackle unbalanced digraphs in distributed convex optimization problem. These strategies are then extented to address the more challenging scenario of distributed nonconvex optimization over unbalanced digraphs \cite{tatarenko2017non,chen2021distributed}. However, the agents in those works were required to know their out-degree \cite{Nedic2014distributed,tatarenko2017non}, which is a form of global information. 

In the continuous-time case, a new distributed control strategy was developed based on the topology balancing technique to tackle the distributed convex optimization over unbalanced directed networks \cite{Li2017distributed}. Nevertheless, it cannot be adopted when the left eigenvector corresponding to the zero eigenvalue of the concerned Laplacian matrix is not available in advance. For the same problem, a distributed estimator was designed in \cite{Zhu2018continuous} to remove the explicit dependence on the left eigenvector, and the gradient term therein was divided by the state of the distributed estimator. However, the control gains of the algorithm proposed in \cite{Zhu2018continuous} still involve certain global information concerning the network connectivity such as the second smallest eigenvalue of the Laplacian matrix, and the smallest strong convexity constant of local cost functions. To the best of our knowledge, no distributed \textit{continuous-time} algorithm has been proposed to address the distributed \textit{nonconvex} optimization over unbalanced directed networks up till now.
To sum up, the existing algorithms that tackle unbalanced digraphs more or less rely on global information concerning the network connectivity and/or cost functions, and are thus not \textit{fully distributed}.

Motivated by the above observations, this paper aims at developing a fully distributed continuous-time algorithm to address the distributed nonconvex optimization over unbalanced directed networks. The main challenge lies in establishing asymptotic convergence of the agent states in the absence of symmetric Laplacian matrix and the convexity of local cost functions. A novel algorithm is developed over unbalanced directed network topologies based on the topology balancing technique \cite{Bullo2019lectures,Zhu2018continuous} and adaptive control approach \cite{Li2017distributed,ioannou2012robust}. The developed continuous-time algorithm is fully distributed in the sense that it does not depend on any global information about the network connectivity or the local cost functions. The main contributions of this paper are summarized as follows.

1) In contrast with those works addressing the DOP for undirected graphs or balanced digraphs \cite{Wang2010control,Gharesifard2013distributed,Kia2015distributed,li2020distributed,li2021distributed,Wu2018distributed}, this work considers unbalanced digraphs that are more general and also more challenging. Contrary to the algorithms in \cite{Zhu2018continuous,Li2017distributed,tatarenko2017non,chen2021distributed} that deal with unbalanced digraphs, our distributed adaptive continuous-time algorithm does not depend on any global information concerning network connectivity or cost functions, and can be applied in a fully distributed manner. Specifically, the left eigenvector corresponding to the zero eigenvalue of the Laplacian matrix, which plays a crucial role in \cite{Li2017distributed}, no longer needs to be known \textit{a priori}. The algorithm proposed in this work is thus expected to be adopted in a wider range of applications.

2) The requirement on the convexity of the local cost functions, which are exploited to facilitate the convergence analysis in \cite{Wang2010control,Gharesifard2013distributed,Kia2015distributed,li2020distributed,li2021distributed,Zhu2018continuous}, is removed. Instead, in this work, the local cost functions are allowed to be nonconvex. Such a relaxed condition greatly broadens the application scope of distributed optimization.

The layout of this paper is as follows. Section \ref{section preliminaries} reviews some preliminaries on graph theory and convex analysis, and provides the problem formulation and control objective of this paper. Section \ref{section main results} presents the algorithm design and convergence analysis by means of adaptive control. Section \ref{section simulation results} provides two examples to illustrate the effectiveness of the proposed algorithm, which is followed by the conclusion and future challenge in Section \ref{section conclusion}.

\textit{Notation}: Let $\mathbb{R}$, $\mathbb{R}^{n}$ and $ \mathbb{R}^{N \times N} $ be the sets of real numbers, $ n $-order real vectors and $ N $-dimensional real square matrices, respectively. $I_{n}$ refers to the $ n $-dimensional identity matrix. Let $\mathbf{0}_{n}$ and $\mathbf{1}_{n}$, or simply $\mathbf{0}$ and $\mathbf{1}$, represent the $ n $-dimensional column vector in which all entries are equal to $ 0 $ and $ 1 $, respectively. $ A_{i} $ and $ A_{i}^{j} $ denote the $ i $th row elements and the $ (i,j) $ entry of matrix $ A $, respectively. The transpose of vector $ x $ and matrix $ A $ are denoted by $x^{\mathrm{T}}$ and $A^{\mathrm{T}}$, respectively. $\|\cdot\|$ represents the Euclidean norm of vectors or induced 2-norm of matrices. The Kronecker product of matrices $ A $ and $ B $ is represented by $ A\otimes B $. $ \operatorname{col}\left(x_{1}, x_{2}, \ldots, x_{n}\right) $ and $\operatorname{diag}\left(x_{1}, x_{2}, \ldots, x_{n}\right)$ represent a column vector and a diagonal matrix, respectively, with $x_{1}, x_{2}, \ldots, x_{n}$ being their elements.

\section{Preliminaries and Problem Formulation}\label{section preliminaries}
In this section, we first present some preliminaries on graph theory and convex analysis, and then formulate the problem.

\subsection{Graph Theory}
A \textit{directed graph} (in short, a \textit{digraph}) of order $ N $ can be described by a triplet $\mathcal{G}=(\mathcal{V}, \mathcal{E}, \mathcal{A})$, where $\mathcal{V}=\{1, \ldots, N\}$ is a set of nodes, $\mathcal{E} \subseteq \mathcal{V} \times \mathcal{V}$ is a collection of edges, and an adjacency matrix $\mathcal{A}$. 
For $ i,j\in\mathcal{V} $, the ordered pair $(j, i) \in \mathcal{E}$ refers to an edge from $ j $ to $ i $.
A directed path in a digraph is an ordered sequence of nodes, in which any pair of consecutive nodes is a directed edge. 
A digraph is said to be \textit{strongly connected} if for each node, there exists a directed path from any other node to itself. 
The adjacency matrix is defined as $\mathcal{A}=\left[a_{i j}\right] \in \mathbb{R}^{N \times N}$, where $a_{i i}=0$ for all $ i $, $a_{i j}>0$ if $(j, i) \in \mathcal{E}$, otherwise $a_{i j}=0 $. The Laplacian matrix $\mathcal{L}=\left[l_{i j}\right] \in \mathbb{R}^{N \times N}$ associated with the digraph $\mathcal{G}$ is defined as $l_{i i}=\sum_{j=1}^{N} a_{i j}$ and $l_{i j}=-a_{i j}$ for $i \neq j $. A digraph $\mathcal{G}$ is called \textit{balanced} if and only if $\mathbf{1}_{N}^{\mathrm{T}} \mathcal{L}=\mathbf{0}_{N}^{\mathrm{T}}$, otherwise it is called \textit{unbalanced}. One can consult \cite{Bullo2019lectures} for more details. 

\begin{lemma}(see \cite{Olfati-Saber2004consensus,Mei2015distributed}) \label{graph theory lemma}
	Let $ \mathcal{L} $ be the Laplacian matrix associated with a strongly connected digraph $ \mathcal{G} $. Then the following statements hold.
	\begin{itemize}
		\item[\romannumeral1)] \label{graph theory lemma_1} There exists a left eigenvector $ \xi=(\xi_{1},\xi_{2},\ldots,\xi_{N})^{\mathrm{T}}  $ associated with the zero eigenvalue of the Laplacian matrix such that $ \sum_{i=1}^{N}\xi_{i}=1 $, $ \xi_{i}>0, ~i=1,2,\ldots,N $, and $ \xi^{\mathrm{T}}\mathcal{L}=\mathbf{0}_{N}^{\mathrm{T}} $.
		\vskip 0.1cm
		\item[\romannumeral2)] \label{graph theory lemma_2} Define $\bar{\mathcal{L}}=\ \mathcal{R}\mathcal{L}+\mathcal{L}^{\mathrm{T}}\mathcal{R} $ with $ \mathcal{R}=\mathrm{diag}\left(\xi_{1},\xi_{2},\ldots,\xi_{N} \right) $. Then 
		$\min_{\varsigma^{\operatorname{T}}x=0,x\neq0}x^{\operatorname{T}}\bar{\mathcal{L}}x > \frac{\lambda_{2}(\bar{\mathcal{L}})}{N}x^{\operatorname{T}}x $ for any positive vector $ \varsigma $, where $ \lambda_{2}(\bar{\mathcal{L}}) $ represents the second smallest eigenvalue of $ \bar{\mathcal{L}} $.
		\vskip 0.1cm
		\item[\romannumeral3)] \label{graph theory lemma_3} $ e^{-\mathcal{L}t} $ is a nonnegative matrix with positive
		diagonal entries for all $ t>0 $, and $ \lim_{t\to\infty}e^{-\mathcal{L}t}= \mathbf{1}_{N}\xi^{\mathrm{T}} $.
	\end{itemize}
\end{lemma}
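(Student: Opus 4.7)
The plan is to establish the three items with standard tools: Perron--Frobenius theory for (i), a Metzler-matrix exponential argument with spectral asymptotics for (iii), and a refined variational estimate for (ii). I expect (i) and (iii) to be routine and (ii) to be the genuine difficulty.

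For (i), I would shift $\mathcal{L}^{\mathrm{T}}$ to a nonnegative matrix by choosing $c\geq\max_i l_{ii}$ and setting $M=cI-\mathcal{L}^{\mathrm{T}}$. Every off-diagonal entry of $M$ equals $a_{ji}\geq 0$, the diagonal is nonnegative, and $M$ inherits the directed graph of $\mathcal{G}$, hence is irreducible. Since $\mathcal{L}$ has $0$ as an eigenvalue and, by Gershgorin, its other eigenvalues $\lambda$ satisfy $|\lambda-l_{ii}|\leq l_{ii}$ (so $|c-\lambda|\leq c$), one has $\rho(M)=c$, and the Perron--Frobenius theorem supplies a strictly positive eigenvector $\xi$ with $M\xi=c\xi$, i.e.\ $\mathcal{L}^{\mathrm{T}}\xi=0$; normalizing $\sum_i\xi_i=1$ finishes (i). For (iii), the factorization $e^{-\mathcal{L}t}=e^{-ct}e^{(cI-\mathcal{L})t}$ with $cI-\mathcal{L}\geq 0$ gives nonnegativity of $e^{-\mathcal{L}t}$, and the constant term of the power series produces the diagonal lower bound $e^{-ct}>0$. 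The limit follows from the spectral/Jordan decomposition of $\mathcal{L}$: since $0$ is a simple eigenvalue and every other eigenvalue has strictly positive real part, $e^{-\mathcal{L}t}$ converges to the spectral projector onto the zero-eigenspace, which, biorthonormalized through $\xi^{\mathrm{T}}\mathbf{1}=1$, equals $\mathbf{1}\xi^{\mathrm{T}}$.

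For (ii), I would first rewrite $\bar{\mathcal{L}}$ as the Laplacian of the undirected graph with edge weights $\xi_i a_{ij}+\xi_j a_{ji}$, which is connected because $\mathcal{G}$ is strongly connected; hence $\bar{\mathcal{L}}$ is symmetric positive semidefinite with one-dimensional null space $\mathrm{span}(\mathbf{1})$. Writing $x=\alpha\mathbf{1}+x_\perp$ with $\mathbf{1}^{\mathrm{T}}x_\perp=0$ gives $x^{\mathrm{T}}\bar{\mathcal{L}}x\geq\lambda_2(\bar{\mathcal{L}})\|x_\perp\|^2$. Solving $\varsigma^{\mathrm{T}}x=0$ yields $\alpha=-\varsigma^{\mathrm{T}}x_\perp/(\varsigma^{\mathrm{T}}\mathbf{1})$, and applying Cauchy--Schwarz against the $\mathbf{1}^\perp$-component of $\varsigma$ produces $\alpha^2\leq(\|\varsigma\|^2-(\varsigma^{\mathrm{T}}\mathbf{1})^2/N)\|x_\perp\|^2/(\varsigma^{\mathrm{T}}\mathbf{1})^2$. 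The key elementary identity $(\varsigma^{\mathrm{T}}\mathbf{1})^2-\|\varsigma\|^2=2\sum_{i<j}\varsigma_i\varsigma_j$, which is strictly positive exactly because every entry of $\varsigma$ is positive, sharpens this bound to $\alpha^2<\tfrac{N-1}{N}\|x_\perp\|^2$; rearranging gives $\|x_\perp\|^2>\|x\|^2/N$, whence $x^{\mathrm{T}}\bar{\mathcal{L}}x>\lambda_2(\bar{\mathcal{L}})\|x\|^2/N$ as claimed.

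The main obstacle is (ii): the constraint $\varsigma^{\mathrm{T}}x=0$ for an arbitrary positive $\varsigma$ is strictly weaker than $\mathbf{1}^{\mathrm{T}}x=0$, and a bare Cauchy--Schwarz falls short because the controlling ratio $(\|\varsigma\|^2-(\varsigma^{\mathrm{T}}\mathbf{1})^2/N)/(\varsigma^{\mathrm{T}}\mathbf{1})^2$ can approach $(N-1)/N$ in configurations where $\varsigma$ is highly unbalanced. The strict positivity of \emph{every} entry of $\varsigma$ is precisely what forces the cross-term $\sum_{i<j}\varsigma_i\varsigma_j$ to be strictly positive and closes the remaining gap, ensuring the inequality is strict.
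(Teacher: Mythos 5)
Your proposal is correct. Note that the paper does not prove this lemma at all---it is imported verbatim from \cite{Olfati-Saber2004consensus,Mei2015distributed}---so there is no in-paper argument to compare against; your Perron--Frobenius shift for (i), the nonnegative-exponential factorization plus spectral projector for (iii), and the mirror-graph Laplacian reading of $\bar{\mathcal{L}}$ for (ii) are exactly the standard route taken in those references. The one genuinely delicate point, the strictness in (ii), is handled correctly: your reduction of the required bound $\alpha^{2}<\tfrac{N-1}{N}\|x_{\perp}\|^{2}$ to the inequality $\|\varsigma\|^{2}<(\varsigma^{\mathrm{T}}\mathbf{1})^{2}$, which holds precisely because $2\sum_{i<j}\varsigma_{i}\varsigma_{j}>0$ for a strictly positive $\varsigma$, is right (you should just record explicitly that $x_{\perp}\neq 0$, since $x_{\perp}=0$ together with $\varsigma^{\mathrm{T}}x=0$ and $\varsigma>0$ would force $x=0$, which is excluded).
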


\subsection{Convex Analysis}
This subsection reviews the definitions of convexity and Lipschitz continuity. One can refer to \cite{bertsekas2009convex,khalil2002nonlinear} for more details.

A subset $\varOmega$ of $\mathbb{R}^{n}$ is called convex if, for all $ \alpha \in[0,1] $,
$$
\alpha x+(1-\alpha) y \in \varOmega, \quad \forall x, y \in \varOmega.
$$
A function $f: \varOmega\subset\mathbb{R}^{n} \mapsto \mathbb{R}$ is called \textit{convex} if, for all $ \alpha \in[0,1] $,
\begin{equation} \label{convex}
	f(\alpha x+(1-\alpha) y) \leq \alpha f(x)+(1-\alpha) f(y), \quad \forall x, y \in \varOmega,
\end{equation}
otherwise it is called \textit{nonconvex}. A convex function $f: \varOmega \mapsto \mathbb{R}$ is called \textit{strictly convex} if, for all $ \alpha \in(0,1) $, the inequality (\ref{convex}) is strict for all $ x,y\in\varOmega $ with $ x\neq y $.
A convex function $f: \varOmega \mapsto \mathbb{R}$ is called \textit{strongly convex} if there exists a positive constant $m $ such that $(x-y)^{T}(\nabla f(x)-\nabla f(y)) \geq m\|x-y\|^{2}$ for all $x, y \in \varOmega $, where $\nabla f$ denotes the gradient.

A function $g: \mathbb{R}^{n} \mapsto \mathbb{R}^{n}$ is said to be \textit{Lipschitz continuous}, or simply \textit{Lipschitz}, if there exists a constant $l>0$ such that the following Lipschitz condition is satisfied,
\begin{equation} \label{Lipschitz condition}
	\|g(x)-g(y)\| \leq l\|x-y\|, \quad \forall x, y \in \mathbb{R}^{n}.
\end{equation}

\subsection{Problem Formulation} \label{section problem formulation}

Consider a multi-agent system composed of $ N $ identical agents over an unbalanced directed network. Each agent $ i $ is attached with a private local cost function $ f_{i}(s): \mathbb{R}^{n}\to \mathbb{R} $, where $ s\in \mathbb{R}^{n} $ represents the local decision variable. The global cost function and the corresponding optimal solution are defined as $ f(s)=\sum_{i=1}^{N}f_{i}(s) $ and $ s^{\star} $, respectively. In this work, the objective is to design a fully distributed continuous-time algorithm in the sense that neither strong convexity of local cost functions nor global information about network connectivity is required, such that the following DOP can be solved,
\begin{equation} \label{problem}
	\min_{s\in\mathbb{R}^{n}}~ f(s).
\end{equation}

To solve the problem, necessary assumptions are introduced.

\begin{assumption} \label{graph assumption}
	The digraph $ \mathcal{G} $ is strongly connected.
\end{assumption}

\begin{remark}
	In contrast with those works \cite{Wang2010control,Gharesifard2013distributed,Kia2015distributed,li2020distributed,li2021distributed,Wu2018distributed} that address the distributed continuous-time optimization for undirected graphs or balanced digraphs, this paper concentrates on the more general and also more challenging case of unbalanced directed communication topologies. 
	Unbalanced topologies bring challenges to the convergence establishment of the agent states toward the exact optimal solution because consensus cannot be achieved with the unweighted gradient information transmitted by the asymmetric topologies.
\end{remark}

\begin{assumption} \label{assumption_cost functions}
	Each local cost function $ f_{i} $ is differentiable, and its gradient $ \nabla f_{i} $ is globally Lipschitz on $ \mathbb{R}^{n} $ with constant $ l_{i} $. The optimal solution $ s^{*}\in \mathbb{R}^{n} $ to the DOP in (\ref{problem}) exists and is unique.
\end{assumption}

\begin{remark} \label{remark_cost function_1}
	In most existing works that study the DOP  \cite{Wang2010control,Gharesifard2013distributed,Kia2015distributed,li2020distributed,li2021distributed,Zhu2018continuous}, the local cost functions are assumed to be strongly convex. Under such a stringent assumption, the admissible local cost functions are confined to those which can be bounded from the below by a quadratic function. In this work, on the contrary, the local cost functions are allowed to take nonconvex forms, and this greatly enlarges the set of admissible cost functions and thus broadens the scope of application.
\end{remark}

\begin{remark} 
   The existence and uniqueness of the optimal solution can be guaranteed when the global cost function is strictly convex or the set of global minimizers is a singleton\cite{bertsekas2009convex,bolte2014proximal}. The differentiability of local cost functions is required only to facilitate the convergence analysis. Illustrative Example 2 shows that our proposed adaptive algorithm can still solve the concerned problem even if the cost function is nondifferentiable.
\end{remark}

\section{Main Result}\label{section main results}
In this section, we propose a fully distributed adaptive algorithm to solve the DOP in (\ref{problem}) over unbalanced directed networks without any global information concerning the network connectivity or local cost functions.

\subsection{Fully Distributed Algorithm}

In this subsection, based on adaptive control approach, a fully distributed algorithm is designed for each agent $ i,~i=1,2,\ldots,N $ as follows,
\begin{equation} \label{algorithm}
	\small
	\left\{
	\begin{aligned}
		\dot{x}_{i} = & -\tfrac{\nabla f_{i}(x_{i})}{w_{i}^{i}} - (\sigma_{i}+\rho_{i})\sum_{j=1}^{N}a_{ij}(x_{i}-x_{j}) - \sum_{j=1}^{N}a_{ij}(v_{i}-v_{j}),  \\[-1mm]
		\dot{v}_{i} = &	(\sigma_{i}+\rho_{i})\sum_{j=1}^{N}a_{ij}(x_{i}-x_{j}), \\[-1mm]
		\dot{w}_{i} = & -\sum_{j=1}^{N} a_{i j}\left(w_{i}-w_{j} \right), \\[-1mm]
		\dot{\sigma}_{i} = & \Big(\sum_{j=1}^{N}a_{ij}(x_{i}-x_{j})\Big)^{\operatorname{T}}\Big(\sum_{j=1}^{N}a_{ij}(x_{i}-x_{j})\Big), 
	\end{aligned}
	\right.
\end{equation}
where $ x_{i}\in \mathbb{R}^{n} $ is the state of agent $ i $, $ v_{i}\in \mathbb{R}^{n} $ and $ w_{i}\in \mathbb{R}^{N}$ are two auxiliary variables, $ w_{i}^{i} $ is the $ i $th component of $ w_{i} $, and the initial value $ w_{i}(0) $ satisfies $ w_{i}^{i}(0)=1 $, otherwise $ w_{i}^{k}(0)=0 $ for all $ k\neq i $; $ \sigma_{i} $ is an adaptive gain with initial condition $ \sigma_{i}(0)>0 $, and the dynamic gain $ \rho_{i} $ is designed as $ \rho_{i} = \big(\sum_{j=1}^{N}a_{ij}(x_{i}-x_{j})\big)^{\operatorname{T}}\big(\sum_{j=1}^{N}a_{ij}(x_{i}-x_{j})\big) $.

Define $ w=\mathrm{col}(w_{1},w_{2},\ldots,w_{N}) $. It follows that $ \dot{w} = -\left(\mathcal{L}\otimes I_{N} \right)w $. 
By referring to $ w_{i}^{i}(0)=1 $, and $ w_{i}^{k}(0)=0 $ for all $ k\neq i $ as well as \romannumeral3) of Lemma \ref{graph theory lemma}, one has
\begin{align} \label{w_i^i}
	w_{i}^{i}(t) = & \big(e^{-(\mathcal{L}\otimes I_{N})t}\big)_{(i-1)N+i}\cdot w(0) \notag\\
	= & \big(e^{-(\mathcal{L}\otimes I_{N})t}\big)_{(i-1)N+i}^{(i-1)N+i}\cdot w_{i}^{i}(0) >0,
\end{align}	
for all $ t \geq 0 $. Therefore, the term $ -\tfrac{\nabla f_{i}(x_{i})}{w_{i}^{i}} $ in algorithm (\ref{algorithm}) is well defined. Moreover, by applying \romannumeral3) of Lemma \ref{graph theory lemma} and recalling the initial condition of $ w(0) $, we can obtain that 
\begin{equation} \label{w_i^i_1}
	\begin{aligned}
		\lim_{t\to\infty}w(t)= &\lim_{t\to\infty}e^{-(\mathcal{L}\otimes I_{N})t}w(0)\\
		=& \left( \mathbf{1}_{N}\xi^{\mathrm{T}}\otimes I_{N}\right)w(0)=\mathbf{1}_{N}\otimes \xi.
	\end{aligned}
\end{equation}
This implies that $ w_{i}^{i}(t), ~i=1,2,\ldots,N $ are bounded for all $ t>0 $.

\begin{remark} \label{remark_z_i}
	Compared to the algorithm proposed in \cite{Li2017distributed}, we use 
	$ \tfrac{\nabla f_{i}(x_{i})}{w_{i}^{i}} $ instead of $ \tfrac{\nabla f_{i}(x_{i})}{\xi_{i}} $ in the adaptive algorithm (\ref{algorithm}) so that it is able to not only tackle the imbalance caused by unbalanced directed topologies but also eliminate the restrictive requirement on the exact value of the left eigenvector corresponding to the zero eigenvalue of the Laplacian matrix. The algorithm (\ref{algorithm}) does not depend on any global information concerning network connectivity or cost functions, and is thus fully distributed. 
\end{remark}

\begin{remark}
	The design of algorithm (\ref{algorithm}) takes the modified Lagrange structure in \cite{Kia2015distributed} with an adaptive control scheme. 
	By using the adaptive gain $ \sigma_{i} $ and the dynamic gain $ \rho_{i} $ instead of constant gains as in \cite{Kia2015distributed,Zhu2018continuous}, some global information concerning cost functions and network connectivity is no longer needed, such as the smallest strong convexity constant of local cost functions, and the second smallest or largest eigenvalue of the Laplacian matrix. 
\end{remark}

Define $$ x=\operatorname{col}(x_{1},x_{2},\ldots,x_{N}), \quad v=\operatorname{col}(v_{1},v_{2},\ldots,v_{N}), $$ 
$$ \mathcal{C}=\operatorname{diag}(\sigma_{1},\sigma_{2},\ldots,\sigma_{N}), \quad \mathcal{B}=\operatorname{diag}(\rho_{1},\rho_{2},\ldots,\rho_{N}), $$
$$ \nabla\tilde{f}(x)=\operatorname{col}\big(\nabla f_{1}(x_{1}),\nabla f_{2}(x_{2}),\ldots,\nabla f_{N}(x_{N})\big), $$ 
$$ \mathcal{W}^{-1}=\operatorname{diag}(\tfrac{1}{w_{1}^{1}},\tfrac{1}{w_{2}^{2}},\ldots,\tfrac{1}{w_{N}^{N}}), \quad e_{i}= \sum_{j=1}^{N}a_{ij}(x_{i}-x_{j}). $$ 
It can be seen from (\ref{w_i^i}) that the matrix $ \mathcal{W}^{-1} $ is well defined. Given any $ \sigma_{i}(0)>0 $, it can be proved that the adaptive gain $ \sigma_{i}(t) $ remains to be positive for all $ t>0 $. Thus, the dynamics of $ (x, v, w, \sigma_{i}) $ can be written in the following form,
\begin{equation} \label{compact form_1}
	\small
	\left\{
	\begin{aligned} 
		\dot{x} = & -(\mathcal{W}^{-1}\otimes I_{n})\nabla \tilde{f}(x) - \big((\mathcal{C}+\mathcal{B})\mathcal{L}\otimes I_{n}\big)x 	- (\mathcal{L}\otimes I_{n})v, \\
		\dot{v} = &	\big((\mathcal{C}+\mathcal{B})\mathcal{L}\otimes I_{n}\big)x, \\
		\dot{w} = & -\left(\mathcal{L}\otimes I_{N} \right)w, \quad
		\dot{\sigma}_{i} = e_{i}^{\operatorname{T}}e_{i}, 
	\end{aligned}
	\right.
\end{equation}

In what follows, our goal is to show that the agent states $ x_{i}, ~i=1,2,\ldots,N $ of (\ref{algorithm}) will converge to the optimal solution  $  s^{\star} $ of the DOP in (\ref{problem}).

\subsection{Convergence Analysis} \label{subsection lemma}

To proceed, a preliminary result on the optimality condition will be first established. 
Define $ \mathcal{R}^{-1}=\operatorname{diag}(\frac{1}{\xi_{1}},\frac{1}{\xi_{2}},\ldots,\frac{1}{\xi_{N}})$, where $ \xi_{i}, ~i=1,2,\ldots,N $ is the $ i $th component of the left eigenvector $ \xi=\left(\xi_{1},\xi_{2},\ldots,\xi_{N} \right)^{\mathrm{T}}  $. The following lemma reveals the optimality condition in terms of a set of equations that the optimal solution $ s^{\star} $ satisfies.

\begin{lemma} \label{optimality_condition}
	Under Assumptions \ref{graph assumption}--\ref{assumption_cost functions}, suppose that the point $ (\bar{x},\bar{v}) $ satisfies the following equations,
	\begin{subequations}\label{equilibrium point_r}
		\small
		\begin{align}
			\mathbf{0} = & \!-\!(\mathcal{R}^{-1}\!\otimes I_{n})\nabla \tilde{f}(\bar{x}) \!-\! \big((\mathcal{C}+\mathcal{B})\mathcal{L}\otimes I_{n}\big)\bar{x} 	\!-\! (\mathcal{L}\otimes I_{n})\bar{v}, \label{equilibrium point_r_a}\\
			\mathbf{0} = & \big((\mathcal{C}+\mathcal{B})\mathcal{L}\otimes I_{n}\big)\bar{x}.\label{equilibrium point_r_b}
		\end{align}
	\end{subequations}
	Then, one has $ \bar{x}=\mathbf{1}_{N}\otimes s^{\star} $, with $ s^{\star} $ being the optimal solution of the DOP in (\ref{problem}).
\end{lemma}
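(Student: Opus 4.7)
My plan is to peel the two equilibrium conditions apart in order: first extract a consensus structure from \eqref{equilibrium point_r_b}, and then use the left eigenvector $\xi$ to collapse \eqref{equilibrium point_r_a} into a first-order optimality condition on the common value.

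First I would observe that $\mathcal{C}+\mathcal{B}$ is a diagonal matrix with strictly positive entries along the trajectory (since $\sigma_i(0)>0$ and $\dot{\sigma}_i=e_i^{\operatorname T}e_i\geq 0$ keep $\sigma_i>0$, and $\rho_i\geq 0$ by construction), so $(\mathcal{C}+\mathcal{B})\otimes I_n$ is invertible. Premultiplying \eqref{equilibrium point_r_b} by this inverse gives $(\mathcal{L}\otimes I_n)\bar{x}=\mathbf{0}$. By Assumption \ref{graph assumption} the digraph is strongly connected, hence $\ker(\mathcal{L})=\operatorname{span}(\mathbf{1}_N)$, and therefore $\bar{x}$ must have the consensus form $\bar{x}=\mathbf{1}_N\otimes\bar{s}$ for some $\bar{s}\in\mathbb{R}^n$.

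Next I would plug this form into \eqref{equilibrium point_r_a}. The term $\bigl((\mathcal{C}+\mathcal{B})\mathcal{L}\otimes I_n\bigr)\bar{x}$ vanishes, leaving
\begin{equation*}
(\mathcal{L}\otimes I_n)\bar{v} \;=\; -(\mathcal{R}^{-1}\otimes I_n)\,\nabla\tilde{f}(\mathbf{1}_N\otimes\bar{s}).
\end{equation*}
I would then premultiply both sides by $\xi^{\operatorname T}\otimes I_n$, where $\xi$ is the positive left eigenvector from Lemma \ref{graph theory lemma}(i). Since $\xi^{\operatorname T}\mathcal{L}=\mathbf{0}_N^{\operatorname T}$, the left-hand side vanishes. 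For the right-hand side the key identity is $\xi^{\operatorname T}\mathcal{R}^{-1}=(\xi_1/\xi_1,\ldots,\xi_N/\xi_N)=\mathbf{1}_N^{\operatorname T}$, so what remains reduces to
\begin{equation*}
\mathbf{0} \;=\; (\mathbf{1}_N^{\operatorname T}\otimes I_n)\,\nabla\tilde{f}(\mathbf{1}_N\otimes\bar{s}) \;=\; \sum_{i=1}^N \nabla f_i(\bar{s}) \;=\; \nabla f(\bar{s}).
\end{equation*}
Finally, since Assumption \ref{assumption_cost functions} postulates the optimal solution $s^\star$ to be unique and $s^\star$ necessarily satisfies $\nabla f(s^\star)=\mathbf{0}$, the relation $\nabla f(\bar{s})=\mathbf{0}$ forces $\bar{s}=s^\star$, yielding $\bar{x}=\mathbf{1}_N\otimes s^\star$.

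The substantive step is the second one; the two obstacles worth flagging are (a) the need to certify that $(\mathcal{C}+\mathcal{B})$ is genuinely invertible so that the kernel argument for \eqref{equilibrium point_r_b} goes through, and (b) the subtle point that in the nonconvex setting $\nabla f(\bar s)=\mathbf{0}$ only identifies a critical point. The lemma therefore implicitly leans on the uniqueness clause of Assumption \ref{assumption_cost functions} being interpreted as uniqueness of the critical point (as is standard in results of this flavor); once that is accepted, the chain of deductions closes immediately. The remaining pieces, notably the identity $\xi^{\operatorname T}\mathcal{R}^{-1}=\mathbf{1}_N^{\operatorname T}$ and the Kronecker manipulations, are routine.
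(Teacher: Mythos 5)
Your proposal is correct and follows essentially the same route as the paper: extract the consensus form $\bar{x}=\mathbf{1}_N\otimes\bar{s}$ from \eqref{equilibrium point_r_b}, premultiply \eqref{equilibrium point_r_a} by $\xi^{\operatorname{T}}\otimes I_n$ to obtain $\sum_{i=1}^N\nabla f_i(\bar{s})=\mathbf{0}$, and invoke uniqueness of $s^\star$. Your version is in fact slightly more careful than the paper's, since you explicitly verify the invertibility of $\mathcal{C}+\mathcal{B}$ and flag the point (which the paper glosses over) that in the nonconvex setting the final step requires reading the uniqueness clause of Assumption \ref{assumption_cost functions} as uniqueness of the stationary point.
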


\begin{proof}
	Define $ \bar{x}=\operatorname{col}(\bar{x}_{1}, \bar{x}_{2}, \ldots, \bar{x}_{N}) $. It can be seen from (\ref{equilibrium point_r_b}) that $ \bar{x}=\mathbf{1}_{N}\otimes q $ holds for some vector $ q \in \mathbb{R}^{n} $. On the one hand, by pre-multiplying both sides of equation (\ref{equilibrium point_r_a}) with $ \xi^{\mathrm{T}} \otimes I_{n} $, it follows from i) of Lemma \ref{graph theory lemma} that $ \sum_{i=1}^{N}\nabla f_{i}(\bar{x}_{i})=\mathbf{0}_{n} $. On the other hand, under Assumption \ref{assumption_cost functions}, the optimality condition $ \sum_{i=1}^{N}\nabla f_{i}(s^{\star})=\mathbf{0}_{n} $ is satisfied. Therefore, it can be obtained that $ \bar{x}=\mathbf{1}_{N}\otimes s^{\star} $.
\end{proof}

With Lemma \ref{optimality_condition} in hand, to prove that the agent states $ x_{i}, ~i=1,2,\ldots,N $ of (\ref{algorithm}) converge to the optimal solution $ s^{\star} $ of the DOP in (\ref{problem}), it is sufficient to show that $ (x,v) $ of (\ref{compact form_1}) converges to $ (\bar{x},\bar{v}) $ in Lemma \ref{optimality_condition}. 

To proceed, define $ \tilde{x}=x-\bar{x} $ and $ \tilde{v}=v-\bar{v} $. By subtracting the equations (\ref{compact form_1}) from (\ref{equilibrium point_r}), and noting that $ \mathcal{W}^{-1}\neq \mathcal{R}^{-1} $, the dynamics of $ (\tilde{x}, \tilde{v}, w, \sigma_{i}) $ can be written in the following form,
\begin{subequations} \label{compact_form_2}
	\small
	\begin{align}
		\dot{\tilde{x}} = & -\big(\mathcal{W}^{-1}\otimes I_{n}\big)h  + \big((\mathcal{R}^{-1} -\mathcal{W}^{-1}) \otimes I_{n}\big)\nabla \tilde{f}(\bar{x}) \notag \\
		& - \big((\mathcal{C}+\mathcal{B})\mathcal{L}\otimes I_{n}\big)\tilde{x} - (\mathcal{L}\otimes I_{n})\tilde{v}, \label{compact_form_2_a}\\
		\dot{\tilde{v}} = &	\big((\mathcal{C}+\mathcal{B})\mathcal{L}\otimes I_{n}\big)\tilde{x}, \label{compact_form_2_b}\\
		\dot{w} = & -\left(\mathcal{L}\otimes I_{N} \right)w, \quad
		\dot{\sigma}_{i} = e_{i}^{\operatorname{T}}e_{i}, \label{compact_form_2_c}
	\end{align} 
\end{subequations} 
where $ h= \nabla \tilde{f}(\bar{x}+\tilde{x}) - \nabla \tilde{f}(\bar{x}) $. Therefore, to show that $ \lim_{t\to\infty}x_{i}(t)=s^{\star} $, by $ \tilde{x}=x-\bar{x} $ in (\ref{compact_form_2}) and $ \bar{x}=\mathbf{1}_{N}\otimes s^{\star} $ in Lemma \ref{optimality_condition}, we only need to prove that the state $ \tilde{x} $ of (\ref{compact_form_2}) coverges to zero as time tends to infinity. 

However, the origin $ (\tilde{x}, \tilde{v})=(\mathbf{0}, \mathbf{0}) $ is not the equilibrium point of (\ref{compact_form_2}) as $ \mathcal{R}^{-1}\neq \mathcal{W}^{-1} $. This brings some extra challenge to the convergence analysis. To tackle this issue, we need to introduce some coordinate transformations. Define two new variables $ \zeta = (\mathcal{L}\otimes I_{n})\tilde{x} $ and $ \eta = (\mathcal{L}\otimes I_{n})\tilde{v} $. In what follows, we will first prove that $ \lim_{t\to\infty} \zeta(t)=\mathbf{0} $ and $ \lim_{t\to\infty} \eta(t)=\mathbf{0} $, which are followed by $ \lim_{t\to\infty}\tilde{x}(t)= \mathbf{1}_{N}\otimes \tau_{1}$ and $ \lim_{t\to\infty}\tilde{v}(t)= \mathbf{1}_{N}\otimes \tau_{2}$, for two constant vectors $ \tau_{1},\tau_{2}\in\mathbb{R}^{n} $. Then, we will show that $ \tau_{1}=\mathbf{0} $ and $ \tau_{2}<\infty $ by seeking a contradiction.

Now we are ready to present the main result of this work.

\begin{theorem} \label{theorem1}
	Suppose Assumptions \ref{graph assumption}--\ref{assumption_cost functions} hold. For $ i=1,2,\ldots,N $, let $ \sigma_{i}(0)>0 $, $ w_{i}^{i}(0)=1 $, and $ w_{i}^{k}(0)=0 $ for all $ k\neq i $. Then, for any initial conditions $ x_{i}(0) $ and $ v_{i}(0) $, the DOP in (\ref{problem}) is solved by the fully distributed algorithm (\ref{algorithm}).
\end{theorem}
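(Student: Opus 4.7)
My plan is to follow the three-stage strategy previewed in the excerpt: first establish that $\zeta=(\mathcal{L}\otimes I_n)\tilde x$ and $\eta=(\mathcal{L}\otimes I_n)\tilde v$ vanish asymptotically; next use the kernel structure of $\mathcal{L}$ under Assumption~\ref{graph assumption} to conclude $\tilde x\to\mathbf{1}_N\otimes\tau_1$ and $\tilde v\to\mathbf{1}_N\otimes\tau_2$; and finally pin down $\tau_1=\mathbf{0}$ by pre-multiplying the limiting version of (\ref{compact_form_2_a}) by $\xi^{\mathrm{T}}\otimes I_n$ and invoking the uniqueness clause in Assumption~\ref{assumption_cost functions}. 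Throughout, the term $((\mathcal{R}^{-1}-\mathcal{W}^{-1})\otimes I_n)\nabla\tilde f(\bar x)$ in (\ref{compact_form_2_a}) is treated as an exponentially vanishing perturbation, since (\ref{w_i^i_1}) together with part~iii) of Lemma~\ref{graph theory lemma} gives $\|\mathcal{W}^{-1}(t)-\mathcal{R}^{-1}\|\to 0$ geometrically; this realises the input-to-state-stability viewpoint advertised in the abstract. At the outset I would also record that $\dot\sigma_i=e_i^{\mathrm{T}}e_i\ge 0$ and $\sigma_i(0)>0$ imply $\sigma_i(t)\ge\sigma_i(0)>0$, so $\mathcal{C}(t)$ stays positive definite and $\mathcal{W}^{-1}(t)$ is well defined by (\ref{w_i^i}).

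For the first step I would try the adaptive Lyapunov candidate
\begin{equation*}
V=\tfrac{1}{2}\tilde x^{\mathrm{T}}(\mathcal{R}\otimes I_n)\tilde x+\tfrac{1}{2}\tilde v^{\mathrm{T}}(\mathcal{R}\otimes I_n)\tilde v+\tfrac{1}{2}\sum_{i=1}^{N}\xi_i(\sigma_i-\sigma^{\star})^2,
\end{equation*}
where $\sigma^{\star}>0$ is an analytic constant (never implemented) chosen large enough to majorise the Lipschitz constants $l_i$ of Assumption~\ref{assumption_cost functions} and the algebraic-connectivity factor $\lambda_2(\bar{\mathcal{L}})/N$ from part~ii) of Lemma~\ref{graph theory lemma}. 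Differentiating along (\ref{compact_form_2}) and using the identity $\mathcal{R}\mathcal{L}+\mathcal{L}^{\mathrm{T}}\mathcal{R}=\bar{\mathcal{L}}$, the adaptive term supplies $-\sigma^{\star}\sum_i\xi_i e_i^{\mathrm{T}}e_i$, part~ii) of Lemma~\ref{graph theory lemma} bounds $\tilde x^{\mathrm{T}}(\bar{\mathcal{L}}\otimes I_n)\tilde x$ from below on the consensus-orthogonal subspace, and the Lipschitz bound on $h=\nabla\tilde f(\bar x+\tilde x)-\nabla\tilde f(\bar x)$ lets me absorb the indefinite gradient terms into a non-positive quadratic in $\zeta$ plus an $L^1$ residual stemming from $\mathcal{W}^{-1}-\mathcal{R}^{-1}$. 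Integrating the resulting estimate $\dot V\le -c\|\zeta\|^2+\varepsilon(t)$ yields boundedness of $\tilde x$, $\tilde v$ and $\sigma_i$; Barbalat's lemma then gives $\zeta(t)\to\mathbf{0}$, after which (\ref{compact_form_2_b}) combined with uniform continuity delivers $\eta(t)\to\mathbf{0}$, and consensus $\tilde x\to\mathbf{1}_N\otimes\tau_1$, $\tilde v\to\mathbf{1}_N\otimes\tau_2$ follows because $\ker(\mathcal{L}\otimes I_n)=\mathrm{span}\{\mathbf{1}_N\}\otimes\mathbb{R}^n$ under Assumption~\ref{graph assumption}.

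For the closing contradiction step I pre-multiply the limiting version of (\ref{compact_form_2_a}) by $\xi^{\mathrm{T}}\otimes I_n$; since $\xi^{\mathrm{T}}\mathcal{L}=\mathbf{0}_N^{\mathrm{T}}$ by part~i) of Lemma~\ref{graph theory lemma} and $\xi^{\mathrm{T}}\mathcal{W}^{-1}(t)\to\xi^{\mathrm{T}}\mathcal{R}^{-1}=\mathbf{1}_N^{\mathrm{T}}$, every coupling term drops out in the limit, leaving $\sum_{i=1}^{N}(\nabla f_i(s^{\star}+\tau_1)-\nabla f_i(s^{\star}))=\mathbf{0}_n$ and hence $\sum_i\nabla f_i(s^{\star}+\tau_1)=\mathbf{0}_n$. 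Thus $s^{\star}+\tau_1$ is a first-order stationary point of $f$; reading the uniqueness clause of Assumption~\ref{assumption_cost functions} as uniqueness of such stationary points then forces $\tau_1=\mathbf{0}$, while $\tau_2$ is automatically finite from the boundedness of $\tilde v$ already established.

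I expect the main obstacle to sit in the second paragraph: constructing a single Lyapunov function that simultaneously (i) compensates for the unbalanced Laplacian through the $\mathcal{R}$-weighting, (ii) tolerates the sign-indefinite nonconvex term $h$, and (iii) absorbs the exponentially vanishing perturbation coming from $\mathcal{W}^{-1}-\mathcal{R}^{-1}$, while letting the adaptive gain $\sigma_i$ carry the burden of dominating the nonconvexity so that no global constants ($\xi$, $l_i$, $\lambda_2(\bar{\mathcal{L}})$) ever appear in the implemented algorithm (\ref{algorithm}). The analytic shift $\sigma^{\star}$ is the standard adaptive trick that enables this compatibility, but getting the cross terms involving the state-dependent gain $\mathcal{B}$ (through $\rho_i=e_i^{\mathrm{T}}e_i$) to settle into the correct sign will require careful balancing of the $(\mathcal{C}+\mathcal{B})$-weighted coupling between $\tilde x$ and $\tilde v$.
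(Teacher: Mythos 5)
Your overall architecture (drive $\zeta=(\mathcal{L}\otimes I_n)\tilde x$ and $\eta=(\mathcal{L}\otimes I_n)\tilde v$ to zero, pass to consensus limits $\tau_1,\tau_2$, then kill $\tau_1$ by pre-multiplying the limiting form of (\ref{compact_form_2_a}) with $\xi^{\mathrm{T}}\otimes I_n$ and invoking uniqueness) coincides with the paper's, and your closing step matches the paper's Part~2 essentially verbatim. The genuine gap sits exactly where you feared: your Lyapunov candidate $V=\tfrac12\tilde x^{\mathrm{T}}(\mathcal{R}\otimes I_n)\tilde x+\tfrac12\tilde v^{\mathrm{T}}(\mathcal{R}\otimes I_n)\tilde v+\tfrac12\sum_i\xi_i(\sigma_i-\sigma^\star)^2$ cannot produce $\dot V\le -c\|\zeta\|^2+\varepsilon(t)$, for three structural reasons. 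First, the adaptive cancellation fails: differentiating the gain term gives the positive contribution $\sum_i\xi_i\sigma_i e_i^{\mathrm{T}}e_i$, while the only $\sigma_i$-dependent term coming from $\tilde x^{\mathrm{T}}(\mathcal{R}\otimes I_n)\dot{\tilde x}$ is $-\sum_i\xi_i(\sigma_i+\rho_i)\,\tilde x_i^{\mathrm{T}}e_i$, a bilinear form in $\tilde x_i$ and $e_i$ rather than the quadratic $e_i^{\mathrm{T}}e_i$, and not sign-definite; the two cannot cancel. Second, the damping you are counting on involves the symmetrization $(\mathcal{C}+\mathcal{B})\mathcal{R}\mathcal{L}+\mathcal{L}^{\mathrm{T}}\mathcal{R}(\mathcal{C}+\mathcal{B})$, which is \emph{not} covered by part ii) of Lemma \ref{graph theory lemma}: that lemma applies to $\bar{\mathcal{L}}=\mathcal{R}\mathcal{L}+\mathcal{L}^{\mathrm{T}}\mathcal{R}$, and $D\mathcal{R}\mathcal{L}+\mathcal{L}^{\mathrm{T}}\mathcal{R}D$ generally loses positive semidefiniteness for a non-scalar positive diagonal $D$ (the column sums of $D\mathcal{R}\mathcal{L}$ no longer vanish). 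Third, the $\tilde x$--$\tilde v$ cross terms do not cancel: you are left with $\tilde v^{\mathrm{T}}(\mathcal{R}(\mathcal{C}+\mathcal{B})\mathcal{L}\otimes I_n)\tilde x-\tilde x^{\mathrm{T}}(\mathcal{R}\mathcal{L}\otimes I_n)\tilde v$, linear in the a priori unbounded $\tilde v$, with no negative quadratic in $\tilde v$ (or $\eta$) available to absorb it.

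The paper resolves all three points by building the Lyapunov function in the $(\zeta,\eta)$ coordinates: $V_1=\tfrac12\sum_i(\sigma_i-\sigma_0)^2$, the \emph{gain-weighted} quadratic $V_2=\tfrac12\sum_i\xi_i(2\sigma_i+\rho_i)\zeta_i^{\mathrm{T}}\zeta_i$, and $V_3=\tfrac12(\zeta+\eta)^{\mathrm{T}}(\mathcal{R}\otimes I_n)(\zeta+\eta)$. Because $V_2$ is quadratic in $\zeta_i=e_i$ and carries the factor $(2\sigma_i+\rho_i)$, its derivative produces the sandwiched term $-2\zeta^{\mathrm{T}}((\mathcal{C}+\mathcal{B})\mathcal{R}\mathcal{L}(\mathcal{C}+\mathcal{B})\otimes I_n)\zeta$, to which Lemma \ref{graph theory lemma} ii) \emph{does} apply after setting $\hat\zeta=((\mathcal{C}+\mathcal{B})\otimes I_n)\zeta$ with weight $\varsigma=(\mathcal{C}+\mathcal{B})^{-1}\xi\otimes\mathbf{1}_N$, while the $\dot\sigma_i$ and $\dot\rho_i$ contributions match $e_i^{\mathrm{T}}e_i$ exactly; and in $V_3$ the combination $\zeta+\eta$ is chosen precisely so that the $(\mathcal{C}+\mathcal{B})$-weighted coupling cancels in $\dot\zeta+\dot\eta$, leaving the negative quadratic $-\eta^{\mathrm{T}}(\mathcal{R}\mathcal{L}\otimes I_n)\eta$ that your candidate lacks. (Your $L^1$/Barbalat treatment of the vanishing perturbation versus the paper's ISS argument is a cosmetic difference; the perturbation is indeed bounded and vanishing. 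The Lyapunov construction is not cosmetic.) Repairing your proof requires replacing your quadratic forms in $\tilde x,\tilde v$ by these $\zeta,\eta$-based, gain-weighted ones, at which point you have reproduced the paper's Step~1.
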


\begin{proof}
	To prove Theorem \ref{theorem1}, it is sufficient to prove that the state $ \tilde{x} $ of (\ref{compact_form_2}) will converge to zero as time tends to infinity in the case of $ \mathcal{R}^{-1}\neq \mathcal{W}^{-1} $. The proof is composed of the following two parts.
	\vskip 0.15cm
	\noindent\textbf{Part 1.} Show that $ \lim_{t\to\infty} \zeta(t)=\mathbf{0} $ and $ \lim_{t\to\infty} \eta(t)=\mathbf{0} $.
	
	To proceed, let $ \zeta_{i}\in\mathbb{R}^{n},~ i=1,2,\ldots,N $ be a column vector stacked from the $ ((i-1)\times n+1) $th element to the $ (i\times n) $th element of vector $ \zeta $. Recalling that $ e_{i}= \sum_{j=1}^{N}a_{ij}(x_{i}-x_{j}) $, simple derivation gives $ e_{i} = \zeta_{i} $. Then, the dynamics of $ (\zeta, \eta, w, \sigma_{i}) $ can be written as follows,
	\begin{equation} \label{Laplacian dynamics_r}
		\small
		\left\{
		\begin{aligned} 
			\dot{\zeta} = & -\big(\mathcal{L}\mathcal{W}^{-1}\otimes I_{n}\big)h + \big(\mathcal{L}(\mathcal{R}^{-1} -\mathcal{W}^{-1}) \otimes I_{n}\big)\nabla \tilde{f}(\bar{x})  \\
			& - \big(\mathcal{L}(\mathcal{C}+\mathcal{B})\otimes I_{n}\big)\zeta - (\mathcal{L}\otimes I_{n})\eta, \\
			\dot{\eta} = &	\big(\mathcal{L}(\mathcal{C}+\mathcal{B})\otimes I_{n}\big)\zeta, \\
			\dot{w} = & -\left(\mathcal{L}\otimes I_{N} \right)w, \quad
			\dot{\sigma}_{i} = \zeta_{i}^{\operatorname{T}}\zeta_{i}.
		\end{aligned}
		\right.
	\end{equation}
	Define $ \chi = \operatorname{col}(\zeta, \eta) $. Then the dynamics of $ \chi $ can be rewritten as
	\begin{equation} \label{tilde_x_new dynamics}
		\dot{\chi}= \varphi(\chi) + \phi(t),
	\end{equation}
	where $ \varphi(\chi) $ and $ \phi(t) $ are defined in (\ref{phi_chi}) on the next page. At first, we will establish the asymptotical convergence to the origin of $ \chi $ in system (\ref{tilde_x_new dynamics}) with $ \mathcal{R}^{-1}\neq \mathcal{W}^{-1} $ and $ \phi(t)\neq 0 $. 
	\newcounter{mytempeqncnt}
	\begin{figure*}[ht]
		\vspace*{-12pt}
		\setcounter{mytempeqncnt}{\value{equation}}
		\begin{align} \label{phi_chi}
			\varphi(\chi) = \left(\begin{array}{c}
				-(\mathcal{L}\mathcal{W}^{-1}\otimes I_{n})h - \big(\mathcal{L}(\mathcal{C}+\mathcal{B})\otimes I_{n}\big)\zeta - (\mathcal{L}\otimes I_{n})\eta \\
				\big(\mathcal{L}(\mathcal{C}+\mathcal{B})\otimes I_{n}\big)\zeta 
			\end{array}\right),  \quad
			\phi(t) = \left(\begin{array}{c}
				\big(\mathcal{L}(\mathcal{R}^{-1} -\mathcal{W}^{-1}) \otimes I_{n}\big)\nabla \tilde{f}(\bar{x}) \\
				\mathbf{0}
			\end{array}\right) 
		\end{align}
		\hrule
		\hrulefill
	\end{figure*}
	
	Under Assumption \ref{assumption_cost functions}, $ h= \nabla \tilde{f}(\bar{x}+\tilde{x}) - \nabla \tilde{f}(\bar{x}) $ and thus $ \varphi(\chi) $ are locally Lipschitz in $ \chi\in\varOmega $ for any compact subset $ \varOmega\in\mathbb{R}^{n} $. In addition, it follows from (\ref{w_i^i_1}) that $ \phi(t) $ is bounded for all $t \geq 0$.
	According to Theorem 4.19 in \cite{khalil2002nonlinear}, to establish the asymptotical convengence to the origin of $ \chi $ in (\ref{tilde_x_new dynamics}), global uniform asymptotical stability of the unperturbed system $ \dot{\chi}= \varphi(\chi) $ should be established at first, and input-to-state stability (ISS) of the system (\ref{tilde_x_new dynamics}) will be presented in turn.
	The proof can be accomplished by two steps.
	
	\textit{Step 1.} Show that the equilibrium point $\chi=\mathbf{0}$ of the unperturbed system $ \dot{\chi}= \varphi(\chi) $ is globally uniformly asymptotically stable. By referring to \cite{bernstein2009matrix}, one can obtain that $ \operatorname{rank}(\mathcal{L}^{\operatorname{T}}\mathcal{L})=\operatorname{rank}(\mathcal{L})=N-1 $. Thus, zero is a simple eigenvalue of matrix $ \mathcal{L}^{\operatorname{T}}\mathcal{L} $. Note that $ \xi_{i} $ and $ \sigma_{i} $ are positive for all $ i=1,2,\ldots,N $. Consider the following Lyapunov function candidate,
	\begin{equation} \label{Lyapunov function candidate}
		V=V_{1}+V_{2}+\tfrac{33N\bar{\lambda}_{N}(\mathcal{L}^{\operatorname{T}}\mathcal{L})}{\lambda_{2}(\bar{\mathcal{L}})^{2}}V_{3},
	\end{equation}
	where $ \bar{\lambda}_{N}(\mathcal{L}^{\operatorname{T}}\mathcal{L}) $ denotes the largest eigenvalue of $ \mathcal{L}^{\operatorname{T}}\mathcal{L} $, $ \lambda_{2}(\bar{\mathcal{L}}) $ denotes the second smallest eigenvalue of $ \bar{\mathcal{L}}=\mathcal{R}\mathcal{L}+\mathcal{L}^{\operatorname{T}}\mathcal{R} $, and
	\begin{equation*} \label{Laplacian dynamics_r}
		\left\{
		\begin{aligned} 
			~V_{1} = & \tfrac{1}{2}\sum_{i=1}^{N}(\sigma_{i}-\sigma_{0})^{2}, \\[-1mm]
			~V_{2} = & \tfrac{1}{2}\sum_{i=1}^{N}\xi_{i}(2\sigma_{i}+\rho_{i})\zeta_{i}^{\operatorname{T}}\zeta_{i}, \\[1mm]
			~V_{3} = &\tfrac{1}{2}(\zeta+\eta)^{\mathrm{T}}(\mathcal{R}\otimes I_{n})(\zeta+\eta),
		\end{aligned}
		\right.
	\end{equation*}
	with $ \sigma_{0} $ being a positive constant to be determined later. 

The derivatives of $ V_{1} $ and $ V_{2} $ along the trajectories of the unperturbed system $ \dot{\chi}= \varphi(\chi) $ satisfy
\begin{align}
	\dot{V}_{1} =& \sum_{i=1}^{N}\zeta_{i}^{\operatorname{T}}(\sigma_{i}-\sigma_{0})\zeta_{i}, \label{final_derivative of V_1}\\
	\dot{V}_{2} = & 2\sum_{i=1}^{N}\xi_{i}(\sigma_{i}+\rho_{i})\zeta_{i}^{\operatorname{T}}\dot{\zeta}_{i} + \sum_{i=1}^{N}\xi_{i}\rho_{i}\zeta_{i}^{\operatorname{T}}\zeta_{i}. \label{final_derivative of V_2}
\end{align}
By combining (\ref{final_derivative of V_1})--(\ref{final_derivative of V_2}), and recalling $ \varphi(\chi) $ in (\ref{phi_chi}), we can obtain that
{\small \begin{align} \label{derivative3 of V_1+V_2}
	\dot{V}_{1} + \dot{V}_{2} \leq  & -2 \zeta^{\operatorname{T}}\big((\mathcal{C}+\mathcal{B})\mathcal{R}\mathcal{L}(\mathcal{C}+\mathcal{B})\otimes I_{n}\big)\zeta + \zeta^{\operatorname{T}}\big((\mathcal{C}+\mathcal{R}\mathcal{B} \notag\\[2mm]
	& -\sigma_{0}I_{N}) \otimes I_{n}\big)\zeta -2 \zeta^{\operatorname{T}}\big((\mathcal{C}+\mathcal{B})\mathcal{R}\mathcal{L}\mathcal{W}^{-1}\otimes I_{n}\big)h \notag\\[2mm]
	& -2 \zeta^{\operatorname{T}}\big((\mathcal{C}+\mathcal{B})\mathcal{R}\mathcal{L}\otimes I_{n}\big)\eta. 
\end{align}}

Define $ \hat{\zeta} = \big((\mathcal{C}+\mathcal{B})\otimes I_{n}\big)\zeta $. Given that $ \mathcal{C} $ and $ \mathcal{B} $ are both positive diagonal matrices, there exists a positive vector $ \varsigma = (\mathcal{C}+\mathcal{B})^{-1}\xi \otimes \mathbf{1}_{N} $, such that
\begin{equation*}
	\begin{aligned}
		\varsigma^{\operatorname{T}}\hat{\zeta} = & \big(\xi^{\operatorname{T}}(\mathcal{C}+\mathcal{B})^{-1}\otimes \mathbf{1}_{N}^{\operatorname{T}}\big)\cdot \big((\mathcal{C}+\mathcal{B})\otimes I_{n}\big)\zeta \\[1mm]
		= & \big(\xi^{\operatorname{T}}\mathcal{L}\otimes \mathbf{1}_{N}^{\operatorname{T}}\big)\tilde{x} = 0.
	\end{aligned}
\end{equation*} 
Thus, it follows from \romannumeral2) of Lemma \ref{graph theory lemma} that
\begin{equation*}
	\begin{aligned}
		& -2 \zeta^{\operatorname{T}}\big((\mathcal{C}+\mathcal{B})\mathcal{R}\mathcal{L}(\mathcal{C}+\mathcal{B})\otimes I_{n}\big)\zeta \\[1mm]
		= & - \zeta^{\operatorname{T}}\big((\mathcal{C}+\mathcal{B})(\mathcal{R}\mathcal{L}+\mathcal{L}^{\operatorname{T}}\mathcal{R})(\mathcal{C}+\mathcal{B})\otimes I_{n}\big)\zeta \\[1mm]
		\leq & -\tfrac{\lambda_{2}(\bar{\mathcal{L}})}{N} \zeta^{\operatorname{T}}\big((\mathcal{C}+\mathcal{B})^{2}\otimes I_{n}\big)\zeta.
	\end{aligned}
\end{equation*}
Since it is proved in (\ref{w_i^i})--(\ref{w_i^i_1}) that $ w_i^i(t)>0, ~i=1,2,\ldots,N $ are bounded for all $ t>0 $, $ \check{w}=\min\big\{\inf_{t>0}w_i^i(t), ~i=1,2,\ldots,N \big\}$ is well defined. Then, the following two inequalities are satisfied,
\begin{equation*}
	\left\{\begin{array}{l}
		-2 \zeta^{\operatorname{T}}\big((\mathcal{C}+\mathcal{B})\mathcal{R}\mathcal{L}\mathcal{W}^{-1}\otimes I_{n}\big)h \\[1mm]
		\leq  \tfrac{\lambda_{2}(\bar{\mathcal{L}})}{4N}\zeta^{\operatorname{T}}\big((\mathcal{C}+\mathcal{B})^{2}\otimes I_{n}\big)\zeta + \tfrac{4N\bar{\lambda}_{N}(\mathcal{L}^{\operatorname{T}}\mathcal{L})}{\lambda_{2}(\bar{\mathcal{L}})\check{w}^{2}}\|h\|^{2}, \\[4mm]
		-2 \zeta^{\operatorname{T}}\big((\mathcal{C}+\mathcal{B})\mathcal{R}\mathcal{L}\otimes I_{n}\big)\eta \\[1mm]
		\leq  \tfrac{\lambda_{2}(\bar{\mathcal{L}})}{4N}\zeta^{\operatorname{T}}\big((\mathcal{C}+\mathcal{B})^{2}\otimes I_{n}\big)\zeta + \tfrac{4N\bar{\lambda}_{N}(\mathcal{L}^{\operatorname{T}}\mathcal{L})}{\lambda_{2}(\bar{\mathcal{L}})}\|\eta\|^{2}.
	\end{array}\right.
\end{equation*}
For convenience, we subsequently abbreviate $ \lambda_{2}(\bar{\mathcal{L}}) $ and $ \bar{\lambda}_{N}(\mathcal{L}^{\operatorname{T}}\mathcal{L}) $ as $ \lambda_{2} $ and $ \bar{\lambda}_{N} $, respectively. Then, (\ref{derivative3 of V_1+V_2}) can be rewritten as follows,
\begin{align} \label{derivative4 of V_1+V_2}
	\dot{V}_{1}+\dot{V}_{2} \leq & -\tfrac{\lambda_{2}}{2N} \zeta^{\operatorname{T}}\Big(\big(\mathcal{C}+\mathcal{B}\big)^{2}\otimes I_{n}\Big)\zeta + \tfrac{4N\bar{\lambda}_{N}}{\lambda_{2}\check{w}^{2}}\|h\|^{2}\notag\\
	& + \tfrac{4N\bar{\lambda}_{N}}{\lambda_{2}}\|\eta\|^{2} + \zeta^{\operatorname{T}}\big((\mathcal{C}+\mathcal{R}\mathcal{B}-\sigma_{0}I_{N})\otimes I_{n}\big)\zeta. 
\end{align}

The derivative of $ V_{3} $ along the trajectories of the unperturbed system $ \dot{\chi}= \varphi(\chi) $ is given as follows,
\begin{align} \label{initial_derivative of V_3}
	\dot{V}_{3} = & \big(\zeta+\eta\big)^{\operatorname{T}}\big(\mathcal{R}\otimes I_{n}\big)\big(\dot{\zeta}+\dot{\eta}\big) \notag\\
	= & -\zeta^{\operatorname{T}}\big(\mathcal{R}\mathcal{L}\mathcal{W}^{-1}\otimes I_{n}\big)h - \zeta^{\operatorname{T}}\big(\mathcal{R}\mathcal{L}\otimes I_{n}\big)\eta \notag\\
	& -\eta^{\operatorname{T}}\big(\mathcal{R}\mathcal{L}\mathcal{W}^{-1}\otimes I_{n}\big)h - \eta^{\operatorname{T}}\big(\mathcal{R}\mathcal{L}\otimes I_{n}\big)\eta. 
\end{align}
Applying \romannumeral2) of Lemma \ref{graph theory lemma} leads to
\begin{equation} \label{inequality1 of initial_derivative of V_3}
	- \eta^{\operatorname{T}}\big(\mathcal{R}\mathcal{L}\otimes I_{n}\big)\eta \leq -\tfrac{\lambda_{2}}{2}\|\eta\|^{2}.
\end{equation}
Moreover, it can be verified that the following inequalities hold,
\begin{equation}\label{inequality2 of initial_derivative of V_3}
	\left\{\begin{array}{l}
		\!\!- \zeta^{\operatorname{T}}\big(\mathcal{R}\mathcal{L}\otimes I_{n}\big)\eta \leq  \tfrac{\lambda_{2}}{8}\|\eta\|^{2} +
		\tfrac{2\bar{\lambda}_{N}}{\lambda_{2}}\|\zeta\|^{2},  \\[2mm]
		\!\!-\eta^{\operatorname{T}}\big(\mathcal{R}\mathcal{L}\mathcal{W}^{-1}\otimes I_{n}\big)h \leq  \tfrac{\lambda_{2}}{8}\|\eta\|^{2} + \tfrac{2\bar{\lambda}_{N}}{\lambda_{2}\check{w}^{2}}\|h\|^{2}, \\[2mm]
		\!\!-\zeta^{\operatorname{T}}\big(\mathcal{R}\mathcal{L}\mathcal{W}^{-1}\otimes I_{n}\big)h \leq  \|\zeta\|^{2} + \tfrac{\bar{\lambda}_{N}}{4\check{w}^{2}}\|h\|^{2}. 
	\end{array}\right.
\end{equation}
Then, substituting inequalities (\ref{inequality1 of initial_derivative of V_3})--(\ref{inequality2 of initial_derivative of V_3})  into (\ref{initial_derivative of V_3}) yields 
\begin{align} \label{final_derivative of V_3}
	\dot{V}_{3} \leq  -\tfrac{\lambda_{2}}{4}\|\eta\|^{2} + \tfrac{\lambda_{2}+2\bar{\lambda}_{N}}{\lambda_{2}}\|\zeta\|^{2} + \tfrac{(8+\lambda_{2})\bar{\lambda}_{N}}{4\lambda_{2}\check{w}^{2}}\|h\|^{2}.
\end{align}
By combining (\ref{derivative4 of V_1+V_2}) and (\ref{final_derivative of V_3}), the derivative of $ V $ in (\ref{Lyapunov function candidate}) along the trajectories of the unperturbed system $ \dot{\chi}= \varphi(\chi) $ satisfies the following inequality,
\begin{align} \label{derivative1 of V}
	\dot{V} \leq & \zeta^{\operatorname{T}}\Big(\big(-\tfrac{\lambda_{2}}{2N}(\mathcal{C}+\mathcal{B})^{2}+(\mathcal{C}+\mathcal{R}\mathcal{B})-\sigma_{0}I_{N} \notag\\
	& +\tfrac{33N\bar{\lambda}_{N}(\lambda_{2} +2\bar{\lambda}_{N})}{\lambda_{2}^{3}}I_{N} \big)\otimes I_{n}\Big)\zeta - \tfrac{17N\bar{\lambda}_{N}}{4\lambda_{2}}\|\eta\|^{2} \notag\\
	& + \Big(\tfrac{4N\bar{\lambda}_{N}}{\lambda_{2}\check{w}^{2}} + \tfrac{33N\bar{\lambda}_{N}^{2}(8+\lambda_{2})}{4\lambda_{2}^{3}\check{w}^{2}}\Big) \|h\|^{2}. 
\end{align}
By the Lipschitz condition of the gradients in Assumption \ref{assumption_cost functions}, we have $ h= \nabla \tilde{f}(\bar{x}+\tilde{x}) - \nabla \tilde{f}(\bar{x}) \leq \hat{l}\|\tilde{x}\| $, where $ \hat{l} = \max\{l_{1},l_{2},\ldots,l_{N}\} $. 

Denote the second smallest eigenvalue of matrix $ \mathcal{L}^{\operatorname{T}}\mathcal{L} $ by $ \bar{\lambda}_{2}(\mathcal{L}^{\operatorname{T}}\mathcal{L}) $, or simply $ \bar{\lambda}_{2} $. Recall that $ \zeta = (\mathcal{L}\otimes I_{n})\tilde{x}$ and $ \mathcal{L}\mathbf{1}_{N}=\mathbf{0}_{N}$. It can be obtained from the symmetry of $ \mathcal{L}^{\operatorname{T}}\mathcal{L} $ and $ \bar{\lambda}_{2}(\mathcal{L}^{\operatorname{T}}\mathcal{L})>0 $ that $ \|\zeta\|^{2}= \tilde{x}^{\operatorname{T}}\big(\mathcal{L}^{\operatorname{T}}\mathcal{L}\otimes I_{n}\big)\tilde{x}\geq \bar{\lambda}_{2}\|\tilde{x}\|^{2} $. Thus, one has $ \|h\|^{2}\leq \hat{l}^{2}/\bar{\lambda}_{2}\|\zeta\|^{2} $.
Define $ \omega_{1} = \tfrac{33N\bar{\lambda}_{N}(\lambda_{2} +2\bar{\lambda}_{N})}{\lambda_{2}^{3}} $ and $\omega_{2} = \frac{\hat{l}^{2}}{\bar{\lambda}_{2}}\Big( \tfrac{4N\bar{\lambda}_{N}}{\lambda_{2}\check{w}^{2}} + \tfrac{17N\bar{\lambda}_{N}^{2}(8+\lambda_{2})}{4\lambda_{2}^{3}\check{w}^{2}}\Big) $. Then, (\ref{derivative1 of V}) can be rewritten as follows,
\begin{align} \label{derivative of V}
	\dot{V} \leq & - \zeta^{\operatorname{T}}\Big(\tfrac{\lambda_{2}}{2N}\big((\mathcal{C}+\mathcal{B})-\tfrac{N}{\lambda_{2}}I_{N}\big)^{2}\otimes I_{n}\Big)\zeta  \notag\\
	& -\Big(\sigma_{0} -\omega_{1}-\omega_{2} -\tfrac{N}{2\lambda_{2}} \Big) \|\zeta\|^{2} - \tfrac{17N\bar{\lambda}_{N}}{4\lambda_{2}}\|\eta\|^{2}. 
\end{align}
Choose $ \sigma_{0} = 1+ \omega_{1} + \omega_{2} + \tfrac{N}{2\lambda_{2}} $. One thus has
\begin{align}
	\dot{V} \leq & -\|\zeta\|^{2} - \tfrac{17N\bar{\lambda}_{N}}{4\lambda_{2}}\|\eta\|^{2}. 
\end{align}
Therefore, it is proved that the equilibrium point $\chi=\mathbf{0}$ of the unperturbed system $ \dot{\chi}= \varphi(\chi) $ is globally uniformly asymptotically stable, and the adaptive control gains $ \sigma_{i},~i=1,2,\ldots,N $ converge to some finite positive constants.

\textit{Step 2.} Show that the perturbed system (\ref{tilde_x_new dynamics}) is ISS, and the state $ \chi(t) $ converges to zero as $ t\to\infty $.
Reconsider the Lyapunov function candidate $ V $ in (\ref{Lyapunov function candidate}), but with $ \sigma_{0} $ being another positive constant to be specified. Similarly, by referring to (\ref{derivative of V}), the derivative of $ V $ along the trajectories of (\ref{tilde_x_new dynamics}) can be given as follows,
\begin{equation} \label{dot_V_per1}
	\small
	\begin{aligned} 
		\dot{V} \leq &  - \zeta^{\operatorname{T}}\Big(\tfrac{\lambda_{2}}{2N}\big(\mathcal{C}+\mathcal{B}-\tfrac{N}{\lambda_{2}}I_{N}\big)^{2}\otimes I_{n}\Big)\zeta \\
		& -\Big(\sigma_{0} -\omega_{1}-\omega_{2} -\tfrac{N}{2\lambda_{2}} \Big) \|\zeta\|^{2} - \tfrac{17N\bar{\lambda}_{N}}{4\lambda_{2}}\|\eta\|^{2} \\
		& + 2 \zeta^{\operatorname{T}}\Big(\big(\mathcal{C}+\mathcal{B}\big)\mathcal{R}\mathcal{L}(\mathcal{R}^{-1}-\mathcal{W}^{-1}(t))\otimes I_{n}\Big)\nabla \tilde{f}(\bar{x}) \\
		& + \epsilon \big(\zeta+\eta\big)^{\operatorname{T}}\Big(\mathcal{R}\mathcal{L}\big(\mathcal{R}^{-1}-\mathcal{W}^{-1}(t)\big)\otimes I_{n}\Big)\nabla \tilde{f}(\bar{x}),
	\end{aligned}
\end{equation}
where $ \epsilon = \tfrac{33N\bar{\lambda}_{N}(\mathcal{L}^{\operatorname{T}}\mathcal{L})}{\lambda_{2}(\bar{\mathcal{L}})^{2}} $.
Define $ u(t)= \big(\mathcal{R}\mathcal{L}(\mathcal{R}^{-1}-\mathcal{W}^{-1}(t))\otimes I_{n}\big)\nabla \tilde{f}(\bar{x}) $. Then the following two inequalities are satisfied,
\begin{equation} \label{inequalities_dot_V_per1}
	\left\{
	\begin{aligned}
		2 \zeta^{\operatorname{T}}\big((\mathcal{C}&+\mathcal{B})\mathcal{R}\mathcal{L}\big(\mathcal{R}^{-1}-\mathcal{W}^{-1}(t)\big)\otimes I_{n}\big)\nabla \tilde{f}(\bar{x}) \\
		\leq & \tfrac{\lambda_{2}}{4N}\zeta^{\operatorname{T}}\big((\mathcal{C}+\mathcal{B})^{2}\otimes I_{n}\big)\zeta + \tfrac{4N}{\lambda_{2}}\|u(t)\|^{2},  \\[1.5mm]
		\big(\zeta+\eta\big)^{\operatorname{T}}&\big(\mathcal{R}\mathcal{L}(\mathcal{R}^{-1}-\mathcal{W}^{-1}(t))\otimes I_{n}\big)\nabla \tilde{f}(\bar{x}) \\
		\leq & \tfrac{\lambda_{2}}{8}\|\eta\|^{2} + \|\zeta\|^{2} + \tfrac{8+\lambda_{2}}{4\lambda_{2}}\|u(t)\|^{2}.
	\end{aligned}
	\right.
\end{equation}
By substituting (\ref{inequalities_dot_V_per1}) into (\ref{dot_V_per1}), one has
\begin{equation*}
	\begin{aligned} 
		\dot{V} \leq &  - \zeta^{\operatorname{T}}\Big(\tfrac{\lambda_{2}}{4N}\big(\mathcal{C}+\mathcal{B}-\tfrac{2N}{\lambda_{2}}I_{N}\big)^{2}\otimes I_{n}\Big)\zeta  -\Big(\sigma_{0} -\omega_{1} -\omega_{2} \\
		& - \epsilon -\tfrac{N}{\lambda_{2}} \Big) \|\zeta\|^{2} - \tfrac{N\bar{\lambda}_{N}}{8\lambda_{2}}\|\eta\|^{2} + \Big( \tfrac{4N}{\lambda_{2}} + \tfrac{\epsilon(8+\lambda_{2})}{4\lambda_{2}} \Big)\|u(t)\|^{2}.
	\end{aligned}
\end{equation*}

Define $ \varrho= \tfrac{4N}{\lambda_{2}} + \tfrac{\epsilon(8+\lambda_{2})}{4\lambda_{2}} $, and $ \kappa=\min\big\{\tfrac{N\bar{\lambda}_{N}}{8\lambda_{2}}, 1\big\} $.
By choosing $ \sigma_{0} = 1+ \omega_{1} + \omega_{2} + \epsilon + \frac{N}{\lambda_{2}} $, one has
\begin{align*}
	\dot{V} \leq & -\|\zeta\|^{2} - \tfrac{N\bar{\lambda}_{N}}{8\lambda_{2}}\|\eta\|^{2} +  + \Big( \tfrac{4N}{\lambda_{2}} + \tfrac{\epsilon(8+\lambda_{2})}{4\lambda_{2}} \Big)\|u(t)\|^{2} \\
	\leq & \kappa\|\chi(t)\|^{2} +  \varrho\|u(t)\|^{2}.
\end{align*}
Let $ 0<\theta<1 $, it can be obtained that
\begin{align*}
	\dot{V} \leq & -(1-\theta)\kappa\|\chi(t)\|^{2}, \quad \forall~ \|\chi(t)\| \geq \sqrt{\frac{\varrho}{\theta\kappa}}\|u(t)\|.
\end{align*}
It then follows from Theorem 4.19 in \cite{khalil2002nonlinear} that the system (\ref{tilde_x_new dynamics}) is ISS.

Note that it is proved in (\ref{w_i^i_1}) that $ \lim_{t\to\infty}w(t)=\mathbf{1}_{N}\otimes \xi  $. One then has $ \lim_{t\to\infty} w_{i}^{i}(t) = \xi_{i} $ and $ \lim_{t\to\infty} \mathcal{W}^{-1}(t) = \mathcal{R}^{-1} $. Thus, $ u(t)= \big(\mathcal{R}\mathcal{L}(\mathcal{R}^{-1}-\mathcal{W}^{-1}(t))\otimes I_{n}\big)\nabla \tilde{f}(\bar{x}) $ converges to zero as $ t\to\infty $. By the definition of ISS, it can be proved (see Exercise 4.58 in \cite{khalil2002nonlinear}) that the state $ \chi(t)=\operatorname{col}(\zeta(t), \eta(t)) $ converges to zero as time goes to infinity. 
\vskip 0.15cm
\noindent\textbf{Part 2.} Show that $ \lim_{t\to\infty}\tilde{x}(t)= \mathbf{0}$ and $ \lim_{t\to\infty}\tilde{v}(t)= \mathbf{1}_{N}\otimes \tau_{2}$, for a constant vector $ \tau_{2}\in\mathbb{R}^{n} $. 

Note that $ \zeta = (\mathcal{L}\otimes I_{n})\tilde{x} $ and $ \eta = (\mathcal{L}\otimes I_{n})\tilde{v} $. By the obtained facts that $ \lim_{t\to\infty} \zeta(t)=\mathbf{0} $ and $ \lim_{t\to\infty} \eta(t)=\mathbf{0} $, one has $ \lim_{t\to\infty}\tilde{x}(t)= \mathbf{1}_{N}\otimes \tau_{1}$ and $ \lim_{t\to\infty}\tilde{v}(t)= \mathbf{1}_{N}\otimes \tau_{2}$, for two constant vectors $ \tau_{1},\tau_{2}\in\mathbb{R}^{n} $. Next, we will show that $ \tau_{1}=\mathbf{0}_{n} $ and $ \tau_{2}<\infty $ by seeking a contradiction.

To this end, by taking limits on both sides of the equation in (\ref{compact_form_2_a}), one has 
\begin{equation} \label{equilibrium_point}
	\small
	\begin{aligned}
		\mathbf{0} = & \lim_{t\to\infty} \Big(-(\mathcal{W}^{-1}(t)\otimes I_{n})\nabla \tilde{f}(\bar{x}+\tilde{x}(t))  + \big(\mathcal{R}^{-1} \otimes I_{n}\big)\nabla \tilde{f}(\bar{x})\Big) \\
		= & \big(\mathcal{R}^{-1} \otimes I_{n}\big) \big(\nabla \tilde{f}(\bar{x}) - \nabla \tilde{f}(\bar{x} + \mathbf{1}_{N}\otimes \tau_{1}) \big).
	\end{aligned}
\end{equation} 
By pre-multiplying both sides of equation (\ref{equilibrium_point}) with $ \xi^{\mathrm{T}} \otimes I_{n} $, one then has $ \sum_{i=1}^{N}\nabla f_{i}(\bar{x}_{i}) = \sum_{i=1}^{N}\nabla f_{i}(\bar{x}_{i} + \tau_{1}) $. Under Assumption \ref{assumption_cost functions}, the optimal solution to the DOP in (\ref{problem}) is unique, which implies that $ \tau_{1}=\mathbf{0}_{n} $. Meanwhile, it follows from (\ref{compact_form_2_b}) that $ \lim_{t\to\infty} \dot{\tilde{v}}(t) = \mathbf{0} $. One thus has $ \tau_{2}<\infty $. To sum up, the trajectories of (\ref{compact_form_2}) are bounded for all $ t>0 $, and $ \tilde{x}(t) $ tends to zero as time goes to infinity. 
Therefore, the convergence of the states $ x_{i}, ~i=1,2,\ldots,N $ of (\ref{algorithm}) to the optimal solution  $ s^{\star} $ of the problem (\ref{problem}) is presented, and the proof is thus completed.

\end{proof}

\begin{remark}
	The adaptive gain $ \sigma_{i} $ in (\ref{algorithm}) is updated based on relative state errors so that it will always increase as long as the consensus is not achieved, eventually rendering the state consensus of the agents. By adopting the adaptive control approach, the requirement on the smallest strong convexity constant of local cost functions is no longer needed to generate a negative term related to agent states. Thus, the proposed adaptive algorithm is able to sovle the DOP when local cost functions are nonconvex.
\end{remark}

\begin{remark}
	The DOP over unbalanced digraphs is investigated in \cite{Zhu2018continuous}. The advantages of the distributed adaptive algorithm (\ref{algorithm}) in this work over the algorithm in \cite{Zhu2018continuous} can be stated in the following two aspects. First, the control gains in \cite{Zhu2018continuous} rely on some prior global information concerning network connectivity and cost functions, such as the second smallest eigenvalue of the Laplacian matrix, the smallest strong convexity constant of local cost functions as well as the largest Lipschitz constant of their gradients. On the contrary, the algorithm (\ref{algorithm}) developed in this work does not require those information and is thus fully distributed. Second, to guarantee the convergence of their algorithm, sufficient large control gains are needed in \cite{Zhu2018continuous}. A potential problem with high-gain feedback is that it may result in some undesirable issues, such as increased sensitivity to unmodeled dynamics and noise, oscillations, and even instability. The algorithm developed in this work can avoid this problem by adopting an adaptive control mechanism.
\end{remark}

\section{Illustrative Examples} \label{section simulation results}

In this section, the effectiveness of the developed fully distributed algorithm (\ref{algorithm}) over unbalanced directed networks is illustrated by two examples. 

\subsection{Example 1}
Consider five networked agents with their communication network topology being described by the unbalanced digraph $ \mathcal{G} $ in Fig. \ref{Fig_topology1}. It can be verified that this digraph is strongly connected, and Assumption \ref{graph assumption} is thus satisfied.
Suppose that agent $ i,~i=1,\ldots,5 $, are endowed with the following local cost functions respectively:
\begin{align*}
	f_{1} & =5\sin\big(\|s+[4, 5]^{\mathrm{T}}\|\big), ~ f_{2}=10\cos\big(\ln(\|s+[8, 10]^{\mathrm{T}}\|)\big), \\
	f_{3} & =4\times\|s+[2, 3]^{\mathrm{T}}\|^{\frac{4}{3}}, \quad f_{4}=2\times\|s-[3, 5]^{\mathrm{T}}\|^{2},\\
	f_{5} & =\|s+[1, 2]^{\mathrm{T}}\|^{2}/\sqrt{\|s+[1, 2]^{\mathrm{T}}\|^{2}+2 },
\end{align*}
where $ s\in\mathbb{R}^{2} $.
Note that the local cost functions $ f_{1}(\cdot) $ and $ f_{2}(\cdot) $ are nonconvex. However, it can be verified that the global cost function $ f(s)=\sum_{i=1}^{5}f_{i}(s) $ is strictly convex, which implies that the global minimizer $ s^{\star} $ is unique. Moreover, it can be verified that the gradients of $ f_{i}(\cdot), ~i=1,\ldots,5 $ are globally Lipschitz on $ \mathbb{R} $, and Assumption \ref{assumption_cost functions} is thus satisfied.

\begin{figure}[!t] 
	\centering 
	\begin{tikzpicture}[> = stealth, 
	shorten > = 1pt, 
	auto,
	node distance = 3cm, 
	semithick 
	,scale=0.6,auto=left,every node/.style={circle,fill=gray!30,draw=black!80,text centered}]
	\centering
	\node (n1) at (0,0)		{1};
	\node (n2) at (2,0)  	{2};
	\node (n3) at (4,0) 	{3};
	\node (n4) at (6,0) 	{4};
	\node (n5) at (8,0) 	{5};
	
	\draw[->,black!80] (n3) to [out=-135,in=-45] (n1);
	\draw[->,black!80] (n1)-- (n2);
	\draw[->,black!80] (n2)-- (n3);
	\draw[->,black!80] (n3)-- (n4);
	\draw[->,black!80] (n4)-- (n5);
	\draw[->,black!80] (n2) to [out=-35,in=-145] (n5);
	\draw[->,black!80] (n5) to [out=150,in=30] (n1);
	
	\end{tikzpicture} 
	\caption{An unbalanced directed network.} 
	\label{Fig_topology1}
\end{figure}
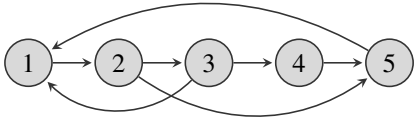

\begin{figure}[t] 
	\centering 
	\includegraphics[height=6.2cm, width=8.4cm]{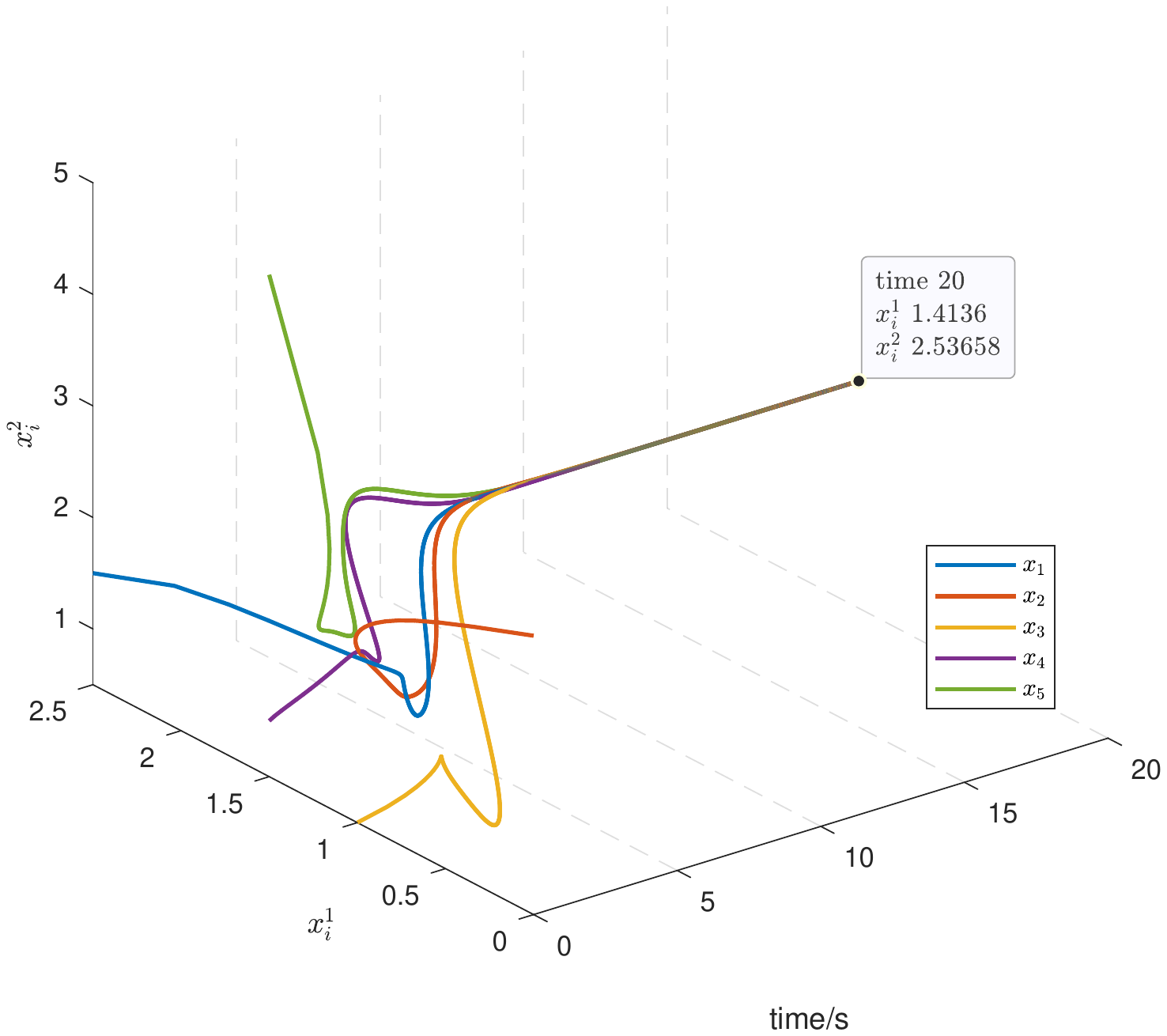} 
	\caption{Convergence performance of the fully distributed algorithm (\ref{algorithm}) over the unbalanced directed network in Fig. \ref{Fig_topology1} under the relaxed condition that the local cost functions are nonconvex.} 
	\label{Fig_compare_on_k_ci}
	\vspace{-0.2cm}
\end{figure}

We now present the convergence performance of the fully distributed algorithm (\ref{algorithm}) under the relaxed condition that the global cost function is strictly convex while local cost functions may be nonconvex. For $ i=1,\ldots,5 $, let the initial values $ x_{i}(0) $ and $ v_{i}(0) $ be arbitrarily chosen, and the initial values of the adaptive gains $ \sigma_{i} $'s be chosen as $ \sigma_{i}(0)=1 $. The simulation results are shown in Fig. \ref{Fig_compare_on_k_ci}. 
It can be observed that the trajectories of agent states $ x_{i}=[x_{i1}, x_{i2}]^{\mathrm{T}}, ~i=1,\ldots,5 $ converge to the optimal solution $ s^{\star}=[1.4136,~ 2.53658]^{\mathrm{T}} $, which minimizes the global cost function $ f(s)=\sum_{i=1}^{5}f_{i}(s) $. Therefore, without either strong convexity of local cost functions or prior global information of network connectivity, it has been shown that the fully distributed algorithm (\ref{algorithm}) solves the DOP in (\ref{problem}).

\subsection{Example 2}
\begin{figure}[t] 
	\centering  
	\includegraphics[height=7.6cm, width=8.8cm]{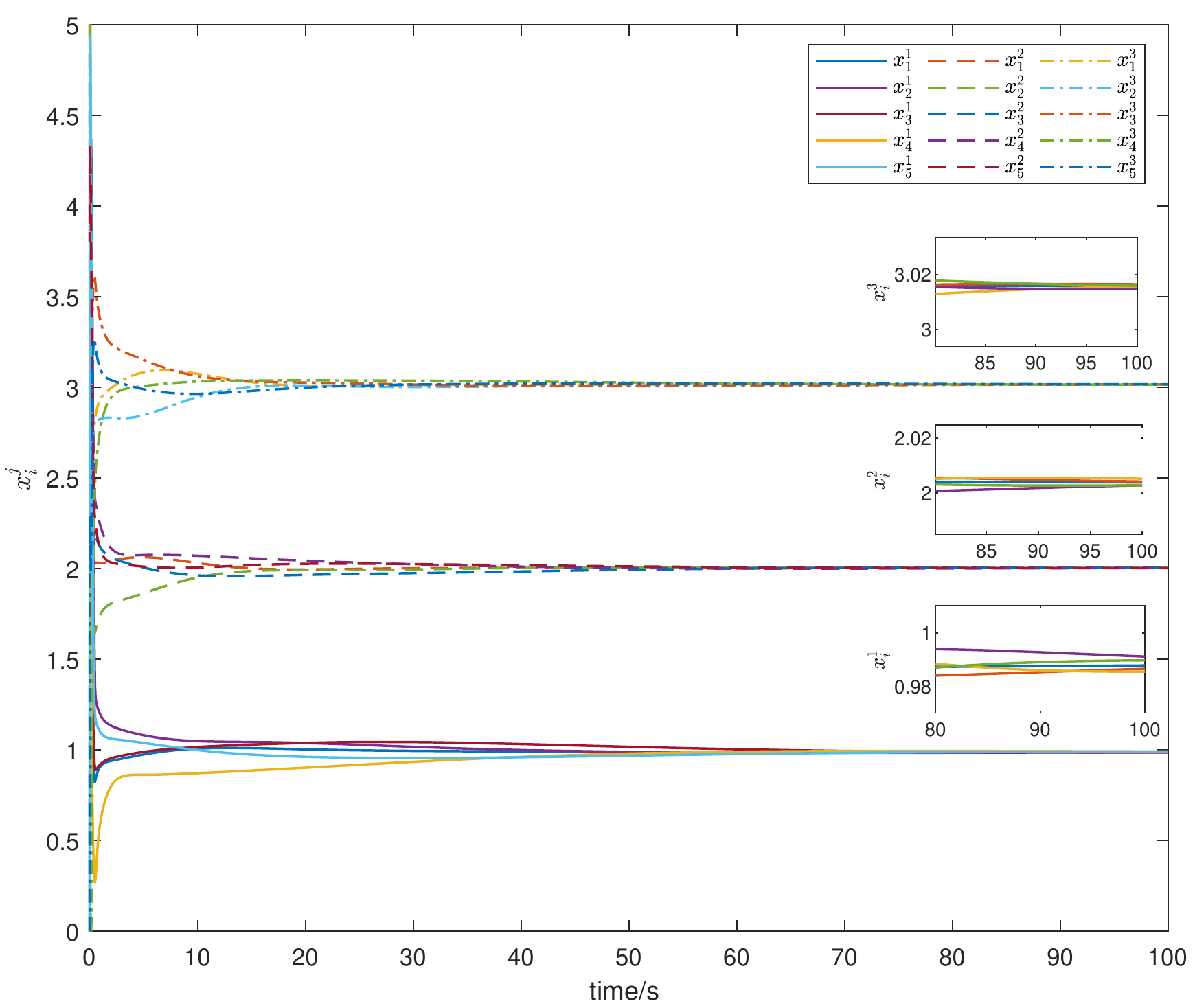} 
	\caption{Convergence performance of algorithm (\ref{algorithm}) for solving the distributed parameters estimation problem. Each measurement satisfies the independent identically Gaussian distribution $ \mathcal{N}(\mu, \sigma_{i}) $, with mean vector $ \mu=[1,2,3]^{\operatorname{T}} $ and covariance matrix $ \sigma_{i} = 0.05iI_{3}, i=1,2,\ldots,5 $. Huber loss parameter $ \varsigma=0.5 $.}
	\label{Fig_compare_existing} 
\end{figure}

In this example, we apply the fully distributed algorithm (\ref{algorithm}) to solve the distributed parameter estimation problem, which is one of the most important research topics in wireless sensor networks. The objective is to estimate a parameter or function based on large amounts of data collected by sensors from the environment. The distributed parameter estimation problem can be reformulated as a DOP. Moreover, the estimator is typically the optimal solution of the corresponding optimization problem. Please refer to \cite{Rabbat2005quantized} and the references therein for more details.

More specifically, consider a group of $ N $ sensors over general directed networks that cooperatively estimate a parameter $ s\in \mathbb{R}^{n} $ by collecting private measurements. Suppose that each sensor $ i $ collects $ n_{i} $ measurements $ Q_{ij}\in \mathbb{R}^{n},j=1,\ldots,n_{i} $. Let $ Q_{i}= \{Q_{ij},j=1,\ldots,n_{i}\} $ denote the private data set of sensor $ i $. Then the local cost function of sensor $ i $ can be defined as follows,
\begin{equation*}
	f_{i}(s)= \big\|\sum_{Q_{ij}\in Q_{i}} H(Q_{ij}, s)\big\|_{1},
\end{equation*}
where $ H(\cdot,\cdot) $ represents the Huber loss function. It is noted that $ f_{i}(s) $ is nondifferentiable. In particular, the Huber loss function for one-dimensional variable is defined as follows,
\begin{equation*}
	H(Q_{ij}, s)=\left\{\begin{array}{l}
		\tfrac{\left(Q_{i j}-s\right)^{2}}{2}, \text { for }\left|Q_{i j}-s\right| \leq \varsigma, \\
		\varsigma\left|Q_{i j}-s\right|-\tfrac{\varsigma^{2}}{2}, \text { for }\left|Q_{i j}-s\right|>\varsigma,
	\end{array}\right.
\end{equation*}
where $ \varsigma $ is a positive constant. Compared with the typical least squares loss function, the Huber loss function takes a smaller value when the difference between the measurements and the parameter to be estimated exceeds the tolerance $ \varsigma $. Therefore, it is less sensitive to outliers and improves the robustness of the estimators. The corresponding DOP can be formulated as follows,
\begin{equation} \label{optimization_estimation}
	\min_{s\in \mathbb{R}^{n}} f(s) = \sum_{i=1}^{N} \big\|\sum_{Q_{ij}\in Q_{i}} H(Q_{ij}, s)\big\|_{1}.
\end{equation}

To proceed, consider a sensor network with communication topology depicted by Fig. \ref{Fig_topology1}. Let the parameter to be estimated be taken as $ s= [1,2,3]^{\operatorname{T}} $. Assume that the data set of each sensor $ i $ contains $ 500 $ measurements. Besides, the obtained measurements satisfy the independent identically Gaussian distribution $ \mathcal{N}(\mu, \sigma_{i}) $ with mean vector $ \mu=[1,2,3]^{\operatorname{T}} $ and covariance matrix $ \sigma_{i} = 0.01iI_{3}, i=1,\ldots,5 $. Let the Huber loss parameter $ \varsigma =0.5 $. We apply the proposed fully distributed algorithm (\ref{algorithm}) to solve the distributed parameter estimation problem in (\ref{optimization_estimation}). Let the initial values of adaptive gains $ \sigma_{i} $ be chosen as $ \sigma_{i}(0)=1$, for $ i=1,\ldots,5 $. The initial values of agent states $ x_{i} $ and auxiliary variables $ v_{i} $ are randomly chosen. 

The simulation result is shown in Fig. \ref{Fig_compare_existing}. It can be observed that $ |x_{i}^{1} - 1| $, $ |x_{i}^{2} - 2| $ and $ |x_{i}^{3} - 3|, i=1,2,\ldots,5 $ are all upper bounded by 0.02 before 100s. In other words, the trajectories of states $ x_{i}, i=1,\ldots,5 $ converge to a small neighborhood of $ s= [1,2,3]^{\operatorname{T}} $, which is the parameter to be estimated. Therefore, it is shown that the distributed parameter estimation problem over unbalanced directed networks can be solved by the fully distributed algorithm  (\ref{algorithm}).


\section{Conclusion} \label{section conclusion}
In this paper, a novel fully distributed continuous-time algorithm has been developed to solve the distributed nonconvex optimization problem over unbalanced directed networks. The proposed adaptive algorithm design does not need any prior global information concerning network connectivity or convexity of local cost functions. Under mild assumptions, the proposed algorithm has guaranteed that the agent states converge to the optimal solution that minimizes the sum of the local cost functions. The effectiveness of the developed fully distributed algorithm has been illustrated by two simulation examples. Future work will focus on solving the fully distributed optimization problem for systems with more general agent dynamics.

%

\bibliographystyle{IEEEtran}  
\bibliography{IEEEabrv,mylib}

\end{document}